\providecommand{\U}[1]{\protect\rule{.1in}{.1in}}
\providecommand{\U}[1]{\protect\rule{.1in}{.1in}}
\providecommand{\U}[1]{\protect\rule{.1in}{.1in}}
\providecommand{\U}[1]{\protect\rule{.1in}{.1in}}
\providecommand{\U}[1]{\protect\rule{.1in}{.1in}}
\providecommand{\U}[1]{\protect\rule{.1in}{.1in}}
\providecommand{\U}[1]{\protect\rule{.1in}{.1in}}
\providecommand{\U}[1]{\protect\rule{.1in}{.1in}}
\providecommand{\U}[1]{\protect\rule{.1in}{.1in}}
\providecommand{\U}[1]{\protect\rule{.1in}{.1in}}
\providecommand{\U}[1]{\protect\rule{.1in}{.1in}}
\providecommand{\U}[1]{\protect\rule{.1in}{.1in}}
\providecommand{\U}[1]{\protect\rule{.1in}{.1in}}
\providecommand{\U}[1]{\protect\rule{.1in}{.1in}}
\providecommand{\U}[1]{\protect\rule{.1in}{.1in}}
\providecommand{\U}[1]{\protect\rule{.1in}{.1in}}
\providecommand{\U}[1]{\protect\rule{.1in}{.1in}}
\providecommand{\U}[1]{\protect\rule{.1in}{.1in}}
\providecommand{\U}[1]{\protect\rule{.1in}{.1in}}
\providecommand{\U}[1]{\protect\rule{.1in}{.1in}}
\providecommand{\U}[1]{\protect\rule{.1in}{.1in}}
\providecommand{\U}[1]{\protect\rule{.1in}{.1in}}
\providecommand{\U}[1]{\protect\rule{.1in}{.1in}}
\providecommand{\U}[1]{\protect\rule{.1in}{.1in}}
\providecommand{\U}[1]{\protect\rule{.1in}{.1in}}
\providecommand{\U}[1]{\protect\rule{.1in}{.1in}}
\providecommand{\U}[1]{\protect\rule{.1in}{.1in}}
\providecommand{\U}[1]{\protect\rule{.1in}{.1in}}
\providecommand{\U}[1]{\protect\rule{.1in}{.1in}}
\providecommand{\U}[1]{\protect\rule{.1in}{.1in}}
\providecommand{\U}[1]{\protect\rule{.1in}{.1in}}
\providecommand{\U}[1]{\protect\rule{.1in}{.1in}}
\providecommand{\U}[1]{\protect\rule{.1in}{.1in}}
\providecommand{\U}[1]{\protect\rule{.1in}{.1in}}
\providecommand{\U}[1]{\protect\rule{.1in}{.1in}}
\providecommand{\U}[1]{\protect\rule{.1in}{.1in}}
\providecommand{\U}[1]{\protect\rule{.1in}{.1in}}
\providecommand{\U}[1]{\protect\rule{.1in}{.1in}}
\newtheorem{theorem}{Theorem}
{}
\newtheorem{corollary}{Corollary}
\newtheorem{definition}{Definition}
\newtheorem{lemma}{Lemma}
{}
\newtheorem{proposition}{Proposition}
\newtheorem{remark}{Remark}
\newenvironment{proof}[1][Proof]{\textbf{#1.} }{\ \rule{0.5em}{0.5em}}
\begin{document}

\title{Spectral Analysis of the Non-self-adjoint \ Mathieu-Hill Operator }
\author{O. A. Veliev\\{\small Depart. of Math., Dogus University, Ac\i badem, Kadik\"{o}y, \ }\\{\small Istanbul, Turkey.}\ {\small e-mail: oveliev@dogus.edu.tr}}
\date{}
\maketitle

\begin{abstract}
We obtain uniform, with respect to $t$ asymptotic formulas for the eigenvalues
of the operators generated in $[0,1]$ by the Mathieu-Hill equation with a
complex-valued potential and by the $t-$periodic boundary conditions. Then
using it we investigate the non-self-adjoint \ Mathieu-Hill operator$\ H$
generated in $(-\infty,\infty)$ by the same equation and establish the
necessary and sufficient conditions for the potential for which $H$ has no
spectral singularity at infinity and it is an asymptotically spectral operator.

Key Words: Hill operator, Spectral singularities, Spectral operator.

AMS Mathematics Subject Classification: 34L05, 34L20.

\end{abstract}

\section{Introduction}

Let $L(q)$ be the Hill operator generated in $L_{2}(-\infty,\infty)$ by the expression%

\begin{equation}
l(y)=-y^{\prime\prime}+qy,
\end{equation}
where $q$ is a complex-valued summable function on $[0,1]$ and $q(x+1)=q(x)$.
It is well-known that (see [8-10]) the spectrum $S(L(q))$ of the operator
$L(q)$ is the union of the spectra $S(L_{t}(q))$ of the operators $L_{t}(q)$
for $t\in(-\pi,\pi],$ where $L_{t}(q)$ is the operator generated in
$L_{2}[0,1]$ by (1) and the boundary conditions
\begin{equation}
y(1)=e^{it}y(0),\text{ }y^{\prime}(1)=e^{it}y^{\prime}(0).
\end{equation}
The spectrum of $L_{t}(q)$ consist of the eigenvalues that are the roots of
\begin{equation}
F(\lambda)=2\cos t,
\end{equation}
where $F(\lambda)=\varphi^{\prime}(1,\lambda)+\theta(1,\lambda)$, $\varphi$
and $\theta$ are the solutions of the equation $l(y)=\lambda y$ satisfying the
initial conditions $\theta(0,\lambda)=\varphi^{\prime}(0,\lambda)=1$
and$\quad\theta^{\prime}(0,\lambda)=\varphi(0,\lambda)=0.$

The operators $L_{t}(q)$ and $L(q)$ are denoted by $H_{t}$ and $H$
respectively when
\begin{equation}
\text{ }q(x)=ae^{-i2\pi x}+be^{i2\pi x},
\end{equation}
where $a$ and $b$ are the nonzero complex numbers. In the cases $t=0$ and
$t=\pi$ the operator $H_{t}$ was investigated by Djakov and Mitjagin\ [2-5].
In [16] we have found the conditions on the potential (4) such that all
eigenvalues of the periodic, antiperiodic, Dirichlet, and Neumann boundary
value problems are simple. In this paper we consider the operators $H$ and
$H_{t}$ for all values of $t\in(-\pi,\pi].$ First, we obtain the asymptotic
formulas, uniform with respect to $t$ in some intervals, for the eigenvalues
of the operators $H_{t}.$ (Note that, the formula\ $f(k,t)=O(h(k))$ is said to
be uniform with respect to $t$ in a set $I$ if there exist positive constants
$M$ and $N,$ independent of $t,$ such that $\mid f(k,t))\mid<M\mid h(k)\mid$
for all $t\in I$ and $\mid k\mid\geq N).$ Then using these asymptotic
formulas, we investigate the spectral singularities and the asymptotic
spectrality of the operator $H$.

Note that the spectral singularities of the operator $L(q)$ are the points of
its spectrum in neighborhoods of which the projections of $L(q)$ are not
uniformly bounded (see [7] and [12]). McGarvey [9] proved that $L(q)$ is a
spectral operator if and only if the projections of the operators $L_{t}(q)$
are bounded uniformly with respect to $t$ in $(-\pi,\pi]$. Recently, Gesztezy
and Tkachenko [6,7] proved two versions of a criterion for the Hill operator
$L(q)$ with $q\in L_{2}[0,1]$ to be a spectral operator of scalar type, in
sense of Danford, one analytic and one geometric. The analytic version was
stated in term of the solutions of Hill's equation. The geometric version of
the criterion uses algebraic and geometric \ properties of the spectra of
periodic/antiperiodic and Dirichlet boundary value problems.

The problem of describing explicitly, for which potentials $q$ the Hill
operators $L(q)$ are spectral operators appears to have been open for about 50
years. Moreover, the discussed papers show that the set of potentials $q$ for
which $L(q)$ is spectral is a small subset of the periodic functions and it is
very hard to describe explicitly the required subset. In paper [14] we found
the explicit conditions on the potential $q$ such that $L(q)$ is an
asymptotically spectral operator and in [17] we constructed the spectral
expansion for the asymptotically spectral operator. In this paper we find a
criterion for asymptotic spectrality of $H$ stated in term of the potential (4).

The paper consists of 5 sections. In Section 2 we present some preliminary
facts, from [13, 14, 3], which are needed in the following. In Section 3 we
obtain some general results for $L_{t}(q)$ with locally integrable potential
$q.$ In Section 4 using the results of Section 3 we obtain the uniform
asymptotic formulas for the operators $H_{t}$. In Section 5, as a main result
of this paper, we find the\ necessary and sufficient conditions on numbers $a$
and $b$ for which $H$ has no spectral singularity at infinity and it is an
asymptotically spectral operator.

\section{Preliminary Facts}

In this section we present some results of [13, 14, 3] which are used for the
proof of the main results of the paper. We use the following results of [13].

\textbf{Theorem 2 of [13].}\textit{ The eigenvalues }$\lambda_{n}(t)$\textit{
\ and eigenfunctions }$\Psi_{n,t}$\textit{ of the operators }$L_{t}%
(q)$\textit{ for }$t\neq0,\pi,$\textit{ satisfy the following asymptotic
formulas }%
\begin{equation}
\lambda_{n}(t)=(2\pi n+t)^{2}+O(n^{-1}\ln\left\vert n\right\vert ),\text{
}\Psi_{n,t}(x)=e^{i(2\pi n+t)x}+O(n^{-1}).
\end{equation}
\textit{for }$n\rightarrow\infty.$\textit{ For any fixed number }$\rho
\in(0,\pi/2),$\textit{ these asymptotic formulas are uniform with respect to
}$t$\textit{ in }$[\rho,\pi-\rho]$\textit{. Moreover, there exists a positive
number }$N(\rho),$\textit{ independent of }$t,$\textit{ such that the
eigenvalues }$\lambda_{n}(t)$\textit{ for \ }$t\in\lbrack\rho,\pi-\rho
]$\textit{ and}$\mid n\mid>N(\rho)$\textit{ are simple.}

In the paper [14] we obtained the uniform asymptotic formulas in the more
complicated case $t\in\lbrack0,\rho]\cup\lbrack\pi-\rho,\pi],$ when the
potential $q$ satisfies the following conditions:\textit{ }%
\[
q\in W_{1}^{p}[0,1],\mathit{\ }q^{(k)}(0)=q^{(k)}(1),\text{ }q_{n}\sim
q_{-n},\text{ }(q_{n})^{-1}=O(n^{s+1})
\]
for $k=0,1,...,s-1$\textit{ }with some\textit{ }$s\leq p$ and at least one of
the inequalities $\operatorname{Re}q_{n}q_{-n}\geq0$ and $\mid
\operatorname{Im}q_{n}q_{-n}\mid\geq\varepsilon\mid q_{n}q_{-n}\mid$ hold for
some $\varepsilon>0,$ where
\begin{equation}
q_{n}=:(q,e^{i2\pi nx})=:\int_{0}^{1}q(x)e^{-i2\pi nx}dx
\end{equation}
is the Fourier coefficient of $q$ and $a_{n}\sim b_{n}$ means that
$a_{n}=O(b_{n})$ and $b_{n}=O(a_{n}).$ It is clear that these results of
[14]\textbf{ }can not be used for the potential (4). However, we use Remark
2.1 and lot of formulas of [14] that are listed in Remark 1 and as formulas (10)-(25).

\begin{remark}
In Remark 2.1 of [14] \ we proved that here exists a positive integer $N(0)$
such that the disk $U(n,t,\rho)=:\{\lambda\in\mathbb{C}:\left\vert
\lambda-(2\pi n+t)^{2}\right\vert \leq15\pi n\rho\}$ for $t\in\lbrack0,\rho],$
where $15\pi\rho<1,$ and $n>N(0)$ contains two eigenvalues (counting with
multiplicities) denoted by $\lambda_{n,1}(t)$ and $\lambda_{n,2}(t)$ and these
eigenvalues can be chosen as a continuous function of $t$ on the interval
$[0,\rho].$ In addition to these eigenvalues, the operator $L_{t}(q)$ for
$t\in\lbrack0,\rho]$ has only $2N+1$ eigenvalues denoted by $\lambda_{k}(t)$
for $k=0,\pm1,\pm2,...,\pm N$ (see Remark 2.1 of [14]). Similarly, there
exists a positive integer $N(\pi)$ such that the disk $U(n,t,\rho)$ for
$t\in\lbrack\pi-\rho,\pi]$ and $n>N(\pi)$ contains two eigenvalues (counting
with multiplicities) denoted again by $\lambda_{n,1}(t)$ and $\lambda
_{n,2}(t)$ that are continuous function of $t$ on the interval $[\pi-\rho
,\pi].$

Thus for $n>$ $N=:\max\left\{  N(\rho),N(0),N(\pi)\right\}  ,$ the eigenvalues
$\lambda_{n,1}(t)$ and $\lambda_{n,2}(t)$ are continuous on $[0,\rho
]\cup\lbrack\pi-\rho,\pi]$ and for$\mid n\mid>N$ \ \textit{the eigenvalue
}$\lambda_{n}(t),$\textit{ defined by (5), is continuous on \ }$[\rho,\pi
-\rho].$\textit{ Moreover, by Theorem 2 of [13] there exist only two
eigenvalues }$\lambda_{-n}(\rho)$ and $\lambda_{n}(\rho)$ of the operator
$L_{\rho}(q)$ lying in the disk $U(n,\rho,\rho).$ Therefore these 2
eigenvalues coincides with the eigenvalues $\lambda_{n,1}(\rho)$ and
$\lambda_{n,2}(\rho).$ By (5) $\operatorname{Re}(\lambda_{-n}(\rho
))<\operatorname{Re}(\lambda_{n}(\rho)).$ Let $\lambda_{n,2}(\rho))$ be the
eigenvalue whose real part is larger. Then%
\begin{equation}
\lambda_{n,1}(\rho)=\lambda_{-n}(\rho),\text{ }\lambda_{n,2}(t)=\lambda
_{n}(\rho).
\end{equation}
In the same way we obtain that
\begin{equation}
\lambda_{n,1}(\pi-\rho)=\lambda_{n}(\pi-\rho),\text{ }\lambda_{n,2}(\pi
-\rho)=\lambda_{-(n+1)}(\pi-\rho)
\end{equation}
if $\lambda_{n,2}(\pi-\rho))$ is the eigenvalue whose real part is larger. Let
$\Gamma_{-n}$ be the union of the following continuous curves $\left\{
\lambda_{n-1,2}(t):t\in\lbrack\pi-\rho,\pi]\right\}  ,$ $\left\{  \lambda
_{-n}(t):t\in\lbrack\rho,\pi-\rho]\right\}  $ and $\left\{  \lambda
_{n,1}(t):t\in\lbrack0,\rho]\right\}  .$ By (7) and \ (8) these curve are
connected and $\Gamma_{-n}$ is a continuous curve. Similarly, the curve
$\Gamma_{n}$ which is the union of the curves $\left\{  \lambda_{n,2}%
(t):t\in\lbrack0,\rho]\right\}  ,$ $\left\{  \lambda_{n}(t):t\in\lbrack
\rho,\pi-\rho]\right\}  $ and $\left\{  \lambda_{n,1}(t):t\in\lbrack\pi
-\rho,\pi]\right\}  $ is a continuous curve.

Let us redenote $\lambda_{n,1}(t)$ and $\,\lambda_{n,2}(t)$ by $\lambda
_{-n}(t)$ and $\lambda_{n}(t)$ respectively for $n>N$ and $t\in\lbrack
0,\rho].$ Similarly redenote $\lambda_{n,1}(t)$ and $\,\lambda_{n,2}(t)$ by
$\lambda_{n}(t)$ and $\lambda_{-n-1}(t)$ respectively for $n>N$ and
$t\in\lbrack\pi-\rho,\pi].$ In this notation we have $\Gamma_{n}=\{\lambda
_{n}(t):t\in\lbrack0,\pi]\}$\ for $\left\vert n\right\vert >N.$ In this paper
we use both notations: $\lambda_{n}(t)$ and $\lambda_{n,j}(t)$.
\end{remark}

One can readily see that
\begin{equation}
\left\vert \lambda-(2\pi(n-k)+t)^{2}\right\vert >\left\vert k\right\vert
\left\vert 2n-k\right\vert ,\text{ \ }\forall\lambda\in U(n,t,\rho)
\end{equation}
for $k\neq0,2n$ and $t\in\lbrack0,\rho]$, where $n>N$.

In [14] to obtain the uniform, with respect to $t\in\lbrack0,\rho],$
asymptotic formulas for the eigenvalues $\lambda_{n,j}(t)$ we used (9) and the
iteration of the formula
\begin{equation}
(\lambda_{n,j}(t)-(2\pi n+t)^{2})(\Psi_{n,j,t},e^{i(2\pi n+t)x})=(q\Psi
_{n,j,t},e^{i(2\pi n+t)x}),
\end{equation}
where $\Psi_{n,j,t}$ is any normalized eigenfunction corresponding to
$\lambda_{n,j}(t).$ Iterating (10) infinite times we got the following
formula
\begin{equation}
(\lambda_{n,j}(t)-(2\pi n+t)^{2}-A(\lambda_{n,j}(t),t))u_{n,j}(t)=(q_{2n}%
+B(\lambda_{n,j}(t),t))v_{n,j}(t),
\end{equation}
where $u_{n,j}(t)=(\Psi_{n,j,t},e^{i(2\pi n+t)x}),$ $v_{n,j}(t)=(\Psi
_{n,j,t},e^{i(-2\pi n+t)x}),$
\begin{equation}
A(\lambda,t)=\sum_{k=1}^{\infty}a_{k}(\lambda,t),\text{ }B(\lambda
,t)=\sum_{k=1}^{\infty}b_{k}(\lambda,t),
\end{equation}%
\begin{equation}
a_{k}(\lambda,t)=\sum_{n_{1},n_{2},...,n_{k}}q_{-n_{1}-n_{2}-...-n_{k}}%
%TCIMACRO{\tprod \limits_{s=1}^{k}}%
%BeginExpansion
{\textstyle\prod\limits_{s=1}^{k}}
%EndExpansion
q_{n_{s}}\left(  \lambda-(2\pi(n-n_{1}-..-n_{s})+t)^{2}\right)  ^{-1},
\end{equation}%
\begin{equation}
b_{k}(\lambda,t)=\sum_{n_{1},n_{2},...,n_{k}}q_{2n-n_{1}-n_{2}-...-n_{k}}%
%TCIMACRO{\tprod \limits_{s=1}^{k}}%
%BeginExpansion
{\textstyle\prod\limits_{s=1}^{k}}
%EndExpansion
q_{n_{s}}\left(  \lambda-(2\pi(n-n_{1}-..-n_{s})+t)^{2}\right)  ^{-1}%
\end{equation}
for $\lambda\in U(n,t,\rho)$ (see (37) of [14]).

Similarly, we obtained the formula
\begin{equation}
(\lambda_{n,j}(t)-(-2\pi n+t)^{2}-A^{\prime}(\lambda_{n,j}(t),t))v_{n,j}%
(t)=(q_{-2n}+B^{\prime}(\lambda_{n,j}(t),t))u_{n,j}(t),
\end{equation}
where%
\begin{equation}
A^{\prime}(\lambda,t)=\sum_{k=1}^{\infty}a_{k}^{\prime}(\lambda,t),\text{
}B^{\prime}(\lambda,t)=\sum_{k=1}^{\infty}b_{k}^{\prime}(\lambda,t),
\end{equation}%
\begin{equation}
a_{k}^{\prime}(\lambda,t)=\sum_{n_{1},n_{2},...,n_{k}}q_{-n_{1}-n_{2}%
-...-n_{k}}%
%TCIMACRO{\tprod \limits_{s=1}^{k}}%
%BeginExpansion
{\textstyle\prod\limits_{s=1}^{k}}
%EndExpansion
q_{n_{s}}\left(  \lambda-(2\pi(n+n_{1}+..+n_{s})-t)^{2}\right)  ^{-1},
\end{equation}%
\begin{equation}
b_{k}^{\prime}(\lambda,t)=\sum_{n_{1},n_{2},...,n_{k}}q_{-2n-n_{1}%
-n_{2}-...-n_{k}}%
%TCIMACRO{\tprod \limits_{s=1}^{k}}%
%BeginExpansion
{\textstyle\prod\limits_{s=1}^{k}}
%EndExpansion
q_{n_{s}}\left(  \lambda-(2\pi(n+n_{1}+..+n_{s})-t)^{2}\right)  ^{-1}%
\end{equation}
for $\lambda\in U(n,t,\rho)$ (see (38) of [14]).

The sums in (13), (14) and (17), (18) are taken under conditions $n_{1}%
+n_{2}+...+n_{s}\neq0,2n$ and $n_{1}+n_{2}+...+n_{s}\neq0,-2n$ respectively,
where $s=1,2,...$

Moreover, it was proved [14] that the equalities
\begin{equation}
a_{k}(\lambda,t),\text{ }b_{k}(\lambda,t),\text{ }a_{k}^{\prime}%
(\lambda,t),\text{ }b_{k}^{\prime}(\lambda,t)=O\left(  (n^{-1}\ln\left\vert
n\right\vert )^{k}\right)
\end{equation}
hold uniformly for $t\in\lbrack0,\rho]$ and $\lambda\in U(n,t,\rho)$\ (see
(34) and (36) of [14]), and derivatives of these functions with respect to
$\lambda$ are $O(n^{-k-1})$ (see the proof of Lemma 2.5) which imply that the
functions $A(\lambda,t),$ $A^{\prime}(\lambda,t),$ $B(\lambda,t)$ and
$B^{\prime}(\lambda,t)$ are analytic on $U(n,t,\rho).$ Moreover, there exists
a constant $K$ such that
\begin{equation}
\mid A(\lambda,t)\mid<Kn^{-1},\mid A^{\prime}(\lambda,t)\mid<Kn^{-1},\mid
B(\lambda,t)\mid<Kn^{-1},\mid B^{\prime}(\lambda,t)\mid<Kn^{-1},
\end{equation}%
\begin{equation}
\mid A(\lambda,t)-A(\mu,t)\mid<Kn^{-2}\mid\lambda-\mu\mid,\mid A^{\prime
}(\lambda,t)-A^{\prime}(\mu,t)\mid<Kn^{-2}\mid\lambda-\mu\mid,
\end{equation}%
\begin{equation}
\mid B(\lambda,t)-B(\mu,t)\mid<Kn^{-2}\mid\lambda-\mu\mid,\mid B^{\prime
}(\lambda,t)-B^{\prime}(\mu,t)\mid<Kn^{-2}\mid\lambda-\mu\mid,
\end{equation}%
\begin{equation}
\mid C(\lambda,t)\mid<tKn^{-1},\text{ }\mid C(\lambda,t))-C(\mu,t))\mid
<tKn^{-2}\mid\lambda-\mu\mid
\end{equation}
for all $\ n>N,$ $t\in\lbrack0,\rho]$ and $\lambda,\mu\in U(n,t,\rho),$ where
$N$ and $U(n,t,\rho)$ are defined in Remark 1, and $C(\lambda,t)=\frac{1}%
{2}(A(\lambda,t)-A^{\prime}(\lambda,t))$ \textbf{(}see Lemma 2.3 and Lemma 2.5
of [14]\textbf{).}

In this paper we use also the following, uniform with respect to $t\in
\lbrack0,\rho],$ equalities from [14] (see (26)-(28) of [14]) for the
normalized eigenfunction $\Psi_{n,j,t}$:
\begin{equation}
\Psi_{n,j,t}(x)=u_{n,j}(t)e^{i(2\pi n+t)x}+v_{n,j}(t)e^{i(-2\pi n+t)x}%
+h_{n,j,t}(x),
\end{equation}%
\begin{equation}
(h_{n,j,t},e^{i(\pm2\pi n+t)x})=0,\text{ }\left\Vert h_{n,j,t}\right\Vert
=O(n^{-1}),\text{ }\left\vert u_{n,j}(t)\right\vert ^{2}+\left\vert
v_{n,j}(t)\right\vert ^{2}=1+O(n^{-2}).
\end{equation}

Besides we use formula (55) of [3] about estimations of $B(\lambda,0)$ and
$B^{\prime}(\lambda,0)$ as follows: \ 

\textit{ Let the potential }$q$\textit{ \ has the form (4), }$\lambda=(2\pi
n)^{2}+z,$\textit{ where \ }$\left\vert z\right\vert <1,$\textit{ and }%
\begin{equation}
p_{n_{1},n_{2},...,n_{k}}(\lambda,0)=q_{2n-n_{1}-n_{2}-...-n_{k}}%
%TCIMACRO{\tprod \limits_{s=1}^{k}}%
%BeginExpansion
{\textstyle\prod\limits_{s=1}^{k}}
%EndExpansion
q_{n_{s}}\left(  \lambda-(2\pi(n-n_{1}-..-n_{s}))^{2}\right)  ^{-1}%
\end{equation}
\textit{be summands of }$b_{k}(\lambda,t)$\textit{ for }$t=0$\textit{ (see
(14)). Then using }(55) of [3] \textit{with }$q\geq2$\textit{ and the
estimation}
\[
\sum_{q\geq2}\left(  _{q}^{n+2q}\right)  \left(  \frac{\left\vert
ab\right\vert }{n^{2}}\right)  ^{q}=O(n^{-2})
\]
\textit{of [3] (see the estimation after formula (55) of [3]) and taking into
account that if }$k$\textit{ changes from }$2n+3$\textit{ to }$\infty
,$\textit{ then the number }$q$\textit{ \ of steps}$=-2$\textit{ (that is, in
our notations the number of indices }$n_{1},n_{2},...,n_{k}$\textit{ of (26)
that are equal to }$1$\textit{) changes from }$2$\textit{ to }$\infty
,$\textit{ we obtain}%

\begin{equation}
\sum_{k=2n+3}^{\infty}\sum_{n_{1},n_{2},...,n_{k}}\left\vert p_{n_{1}%
,n_{2},...,n_{k}}(\lambda,0)\right\vert =b_{2n-1}(\lambda,0)O(n^{-2}).
\end{equation}

\section{Some General Results for $L_{t}(q)$ with $q\in L_{1}[0,1]$}

First, in Theorem 1, we consider the cases: $t=0$ and $t=\pi$ which correspond
to the periodic and antiperiodic boundary conditions (see (2)). These cases
were considered by Djakov and Mitjagin in\ [2, 3] and [4, 5] for $q\in
L_{2}[0,1]$ and $q\in H^{-1}[0,1]$ in detail. We obtain similar results by the
methods of our papers [1, 11] for $q\in L_{1}[0,1]$ and use the following
terminology of [11]. If the set of the Jordan chains of $L_{0}(q)$ is
infinite, then we consider the Riesz basis property of the normal system of
eigenfunction and associated function (EAF), defined in [11] as follows. In
the case when the large eigenvalue has geometric multiplicity $2$, we choose
the pair of normalized eigenfunctions so that they are mutually orthogonal. In
the case when only one eigenfunction $\varphi$ corresponds to the double
eigenvalue $\lambda$, we assume that $\left\Vert \varphi\right\Vert =1$ and
choose the associated function to be orthogonal to $\varphi$ (it is uniquely
defined by this condition). If the set of the Jordan chains is finite, then
one do not need to consider the specially chosen normal system of EAF.
Therefore, in this case, instead of the normal system of EAF we use the system
of root functions.

For brevity, we discuss only the periodic problem and denote $\lambda
_{n,j}(t),$ $A(\lambda_{n,j}(t),t)),$ $B(\lambda_{n,j}(t),t)),$ $A^{\prime
}(\lambda_{n,j}(t),t)),$ $B^{\prime}(\lambda_{n,j}(t),t))$ for $t=0$ (see (11)
and (15)) by $\lambda_{n,j},$ $A(\lambda_{n,j}),$ $B(\lambda_{n,j}),$
$A^{\prime}(\lambda_{n,j}),$ $B^{\prime}(\lambda_{n,j})$ respectively. The
antiperiodic problem is similar to the periodic problem. One can readily see
from \ (11), (15), (20) and Remark 1 that
\begin{equation}
\lambda_{n,j}(t)\in d^{-}(r(n),t)\cup d^{+}(r(n),t)\subset U(n,t,\rho),
\end{equation}
for all $\ n>N,$ $t\in\lbrack0,\rho],$ where $r(n)=\max\{\left\vert
q_{2n}\right\vert ,\left\vert q_{-2n}\right\vert \}+2Kn^{-1}$ and $\ d^{\pm
}(r(n),t)$ is the disk with center $(\pm2\pi n+t)^{2}$ and radius $r(n).$
Indeed if $\left\vert u_{n,j}(t)\right\vert \geq\left\vert v_{n,j}%
(t)\right\vert ,$ then using (11) (if $\left\vert v_{n,j}(t)\right\vert
>\left\vert u_{n,j}(t)\right\vert ,$ then using (15)) and (20) we get (28).

In the case $t=0$ the disks $d^{-}(r(n),t)$ and $d^{+}(r(n),t)$ are the same
and are denoted by $d(r(n)).$ The set of indices $n>N$ for which the periodic
eigenvalues lying in $d(r(n))$ are simple (double) is denoted by
$\mathbb{N}_{1}$ $\,$($\mathbb{N}_{2}$). If $n\in\mathbb{N}_{2}$ then
$\lambda_{n,1}=\lambda_{n,2\text{ }}$ and these eigenvalues are redenoted by
$\lambda_{n}.$

\begin{theorem}
Let $q\in L_{1}[0,1]$ and $N$ be a large number defined in Remark 1.

$(a)$ If $n\in\mathbb{N}_{2}$ and the inequality \textit{ }%
\begin{equation}
\mid q_{2n}+B(\lambda)\mid+\mid q_{-2n}+B^{\prime}(\lambda)\mid\neq0,
\end{equation}
holds for $\lambda=\lambda_{n}$, then the geometric multiplicity of the
eigenvalue $\lambda_{n}$ is $1$.\textit{ }

$(b)$ \textit{Suppose for }$n>N$ there exists an eigenvalue of $L_{0}(q)$
lying in $d(r(n))$ and denoted, for simplicity of notation, by $\lambda_{n,1}%
$\textit{ such that (29) for }$\lambda=\lambda_{n,1}$ \textit{holds. }Then the
normal system of EAF of $L_{0}(q)$ forms a Riesz basis if and only if
\begin{equation}
q_{2n}+B(\lambda_{n,1})\sim\mathit{\ }q_{-2n}+B^{\prime}(\lambda_{n,1}).
\end{equation}

$(c)$ If (29) for $\lambda=\lambda_{n,1}$, where $n>N,$ and (30) hold, then
the large periodic eigenvalues are simple and the system of root functions of
$L_{0}(q)$ forms a Riesz basis.
\end{theorem}

\begin{proof}
$(a)$ Suppose that there exist $2$\ eigenfunctions corresponding to
$\lambda_{n}$. Then one can choose the eigenfunction $\Psi_{n}$ such that
$\left(  \Psi_{n},e^{i2\pi nx}\right)  =0.$ This with (11) \ and (25) implies
that $q_{2n}+B(\lambda_{n})=0$. In the same way we prove that $q_{-2n}%
+B^{\prime}(\lambda_{n})=0.$ The last two equalities contradict (29).

$(b)$ If (29) for $\lambda=\lambda_{n,1}$ and (30) hold, then one can readily
see that
\begin{equation}
\text{ }q_{-2n}+B(\lambda_{n,1})\neq0,\text{ }q_{-2n}+B^{\prime}(\lambda
_{n,1})\neq0.
\end{equation}
These with formulas (11), (15) and (25) imply that
\begin{equation}
u_{n,1}(t)v_{n,1}(t)\neq0
\end{equation}
for $t=0.$ Indeed, if $u_{n,1}(0)=0$ then by (25) $v_{n,1}(0)\neq0$ and by
(11) $q_{2n}+B(\lambda_{n,1})=0$ which contradicts (31). Similarly, if
$v_{n,1}(0)=0$ then by (25) and (15) $q_{-2n}+B^{\prime}(\lambda_{n,1})=0$
which again contradicts (31). By (31) and (32) the right-hand sides of (11)
and (15) for $t=0$ are not zero. Therefore, dividing (11) and (15) side by
side and using the equality $A(\lambda_{n,1})=A^{\prime}(\lambda_{n,1})$ (see
the proof of Lemma 3 of [11]), we get
\begin{equation}
\frac{q_{-2n}+B^{\prime}(\lambda_{n,1})}{q_{2n}+B(\lambda_{n,1})}%
=\frac{u_{n,1}^{2}(0)}{v_{n,1}^{2}(0)}.
\end{equation}
Then, by (30) and (25) we have
\begin{equation}
u_{n,1}(0)\sim v_{n,1}(0)\sim1
\end{equation}
which implies that the set of the Jordan chains is finite (see the end of page
118 of [1]). Thus, if (29) and (30) hold, then the large periodic eigenvalues
are simple. Moreover, by Theorem 1 of [11] the relation (34) implies that the
normal system of EAF of $L_{0}(q)$ form a Riesz basis.

Now suppose that (29) for $\lambda=\lambda_{n,1}$ holds and the normal system
of EAF of $L_{0}(q)$ form a Riesz basis. By Theorem 1 of [11] the set of the
Jordan chains is finite and (34) holds. On the other hand, at least one of the
summands in (29) for $\lambda=\lambda_{n,1}$ are not zero. Suppose, without
less of generality, that $q_{2n}+B(\lambda_{n,1})\neq0.$ Then, using (11),
(15) for $t=0$ and (34) we see that equality (33) holds (see the proof of
(33)). Therefore, using (34) we obtain (30).

$(c)$ The simplicity of the large periodic eigenvalues is proved in $(b).$
Therefore it is enough to note that in this case the Riesz basis property of
the root functions follows from the Riesz basis property of the normal system
of EAF.
\end{proof}

Now, we consider the case $t\in\lbrack0,\rho].$

\begin{theorem}
\textit{ A number }$\lambda\in U(n,t,\rho)$ is an eigenvalue of $L_{t}(q)$ for
$t\in\lbrack0,\rho]$ and $n>N,$ where $U(n,t,\rho)$ and $N$ are defined in
Remark \ 1, if and only if
\begin{equation}
(\lambda-(2\pi n+t)^{2}-A(\lambda,t))(\lambda-(2\pi n-t)^{2}-A^{\prime
}(\lambda,t))=(q_{2n}+B(\lambda,t))(q_{-2n}+B^{\prime}(\lambda,t)).
\end{equation}
Moreover\textit{ }$\lambda\in U(n,t,\rho)$ is a double eigenvalue of $L_{t}$
if and only if it is a double root of (35).
\end{theorem}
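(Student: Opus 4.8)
The plan is to derive equation (35) directly from the two scalar relations (11) and (15) that were obtained by infinite iteration, and then to analyze when the linear system they represent forces a double root.

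First I would set up the $2\times 2$ linear system. Formulas (11) and (15), specialized to a general eigenvalue $\lambda \in U(n,t,\rho)$ with associated quantities $u = u_{n,j}(t)$ and $v = v_{n,j}(t)$, read
\begin{equation}
(\lambda-(2\pi n+t)^{2}-A(\lambda,t))\,u = (q_{2n}+B(\lambda,t))\,v,
\end{equation}
\begin{equation}
(\lambda-(-2\pi n+t)^{2}-A^{\prime}(\lambda,t))\,v = (q_{-2n}+B^{\prime}(\lambda,t))\,u.
\end{equation}
This is a homogeneous linear system in the pair $(u,v)$. The key point, which I would justify using the decomposition (24)–(25), is that for an actual eigenfunction $\Psi_{n,j,t}$ the pair $(u,v)$ is nontrivial: by (25) we have $|u|^2+|v|^2 = 1+O(n^{-2})\neq 0$. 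Hence the $2\times 2$ coefficient matrix must be singular, and setting its determinant to zero yields precisely (35). Conversely, I would argue that if (35) holds for some $\lambda\in U(n,t,\rho)$, the system admits a nonzero solution $(u,v)$, from which one reconstructs an eigenfunction; here I would lean on the fact (Remark 1) that $U(n,t,\rho)$ contains exactly two eigenvalues counted with multiplicity, so the roots of the (entire, by (12)–(23)) function defined as the difference of the two sides of (35) inside this disk correspond exactly to the eigenvalues. The analyticity and the bounds (20)–(22) guarantee that the left- and right-hand sides of (35) are well-defined analytic functions of $\lambda$ on $U(n,t,\rho)$, so (35) is a genuine analytic equation whose roots can be counted.

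For the multiplicity statement, I would compare the algebraic multiplicity of $\lambda$ as an eigenvalue with its order as a root of (35). The natural approach is to observe that the resolvent restricted to the two-dimensional span, after the iteration, is governed by the characteristic function $\Delta(\lambda,t)$ equal to the difference of the two sides of (35); the eigenvalues in $U(n,t,\rho)$ are its zeros, and the multiplicity of an eigenvalue equals the order of the corresponding zero of $\Delta$. Since Remark 1 tells us there are exactly two eigenvalues (with multiplicity) in the disk, $\Delta$ has exactly two zeros there, so a double eigenvalue corresponds to a double zero and vice versa. To make this rigorous I would either invoke an argument-principle/Rouché count comparing $\Delta$ with the leading quadratic $(\lambda-(2\pi n+t)^2)(\lambda-(-2\pi n+t)^2)$, using the smallness estimates (20) to control the perturbation $A, A', B, B'$, or relate $\Delta$ to the restriction of $F(\lambda)-2\cos t$ to the disk.

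The main obstacle will be the multiplicity half of the claim: proving that algebraic multiplicity of the eigenvalue coincides with the root order of the analytic function $\Delta(\lambda,t)$. The equivalence of eigenvalue existence with (35) is essentially linear algebra once (11) and (15) are in hand, but identifying the algebraic multiplicity requires care. The cleanest route is to show that $\Delta(\lambda,t)$ agrees, up to a nonvanishing analytic factor on $U(n,t,\rho)$, with the determinant of $\lambda I - L_t(q)$ restricted to the relevant two-dimensional spectral subspace; equivalently, to show that the two iteration relations (11) and (15) are not merely necessary but capture the full local structure of the resolvent. I would secure this by a dimension count: the spectral projection over $U(n,t,\rho)$ has rank two by Remark 1, and the reduction to the $2\times 2$ system is faithful because the off-diagonal Fourier modes are slaved to $(u,v)$ through the contraction encoded in $A,A',B,B'$ with the norm bounds (20)–(22). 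Once faithfulness is established, a double root of (35) exactly matches a nontrivial Jordan block, completing the proof.
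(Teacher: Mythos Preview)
Your forward direction (singular $2\times 2$ system forces the determinant (35) to vanish) is exactly the paper's argument, only phrased more slickly; the paper does the three cases $u=0$, $v=0$, $uv\neq 0$ explicitly, but it is the same content. Your Rouch\'e comparison of $\Delta(\lambda,t)$ with the leading quadratic $(\lambda-(2\pi n+t)^2)(\lambda-(2\pi n-t)^2)$ using the bounds (20) is also precisely what the paper does to show (35) has exactly two roots in $U(n,t,\rho)$.

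Where you diverge is in the multiplicity statement, and here you are making the problem harder than it is. You flag as the ``main obstacle'' the need to match algebraic multiplicity of the eigenvalue with the root order of $\Delta$, and you propose to do this by showing the reduction to the $2\times 2$ block is faithful, i.e.\ that $\Delta$ agrees with the restricted characteristic determinant up to a unit. The paper bypasses this entirely by a pure pigeonhole count: Remark~1 says $L_t(q)$ has exactly two eigenvalues (with multiplicity) in $U(n,t,\rho)$; Rouch\'e says (35) has exactly two roots (with multiplicity) there; and the first half of the proof shows every eigenvalue is a root of (35). Two multisets of size two, one contained in the other, must coincide---so a double eigenvalue forces a double root and conversely. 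No faithfulness argument, no Jordan-block analysis, no reconstruction of eigenfunctions from $(u,v)$ is needed. Your proposed route through ``reconstruct an eigenfunction from a nonzero $(u,v)$'' is both unnecessary and genuinely delicate (it would require reversing the infinite iteration that produced (11) and (15)), and the paper simply never does it.
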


\begin{proof}
If $u_{n,j}(t)=0,$ then by (25) we have$\ v_{n,j}(t)\neq0$.\ Therefore, (11)
and (15) imply that $q_{2n}+B(\lambda_{n,j}(t),t)=0$ and $\lambda
_{n,j}(t)-(-2\pi n+t)^{2}-A^{\prime}(\lambda_{n,j}(t),t)=0,$ that is, the
right-hand side and the left-hand side of (35)\ vanish \ when $\lambda$ is
replaced by$\ \lambda_{n,j}(t)$. Hence $\lambda_{n,j}(t)$ satisfies (35). In
the same way we prove that if $\ \ v_{n,j}(t)=0$ then $\lambda_{n,j}(t)$ is a
\ root of (35). It remains to consider the case $u_{n,j}(t)v_{n,j}(t)\neq0.$
In this case multiplying (11) and (15) side by side and canceling
$u_{n,j}(t)v_{n,j}(t)$ we get an equality obtained from (35) by replacing
$\lambda$ with $\lambda_{n,j}(t).$\ Thus, in any case $\lambda_{n,j}(t)$ is a
root of\ (35).

Now we prove that the roots of (35) lying in $U(n,t,\rho)$\ are the
eigenvalues of $L_{t}(q).$ Let $F(\lambda,t,)$ be the left-hand side minus the
right-hand side of (35). Using (20) one can easily verify that the inequality
\
\begin{equation}
\mid F(\lambda,t,)-G(\lambda,t)\mid<\mid G(\lambda,t)\mid,
\end{equation}
where $G(\lambda,t)=(\lambda-(2\pi n+t)^{2})(\lambda-(2\pi n-t)^{2}),$ holds
for all $\lambda$ from the boundary of $U(n,t,\rho).$ Since the function
$(\lambda-(2\pi n+t)^{2})(\lambda-(2\pi n-t)^{2})$ has two roots in \ the
\ set $U(n,t,\rho),$ by the Rouche's theorem from (36) we obtain that
$F(\lambda,t,)$ has two roots in the same\ set.\ Thus\ $L_{t}(q)$ has two
eigenvalue (counting with multiplicities) lying in $U(n,t,\rho)$ (see Remark
1) that are the roots of (35). On the other hand, (35) has preciously two
roots (counting with multiplicities) in $U(n,t,\rho).$ Therefore $\lambda\in
U(n,t,\rho)$ is an eigenvalue of $L_{t}(q)$ if and only if (35) holds.

If \textit{ }$\lambda\in U(n,t,\rho)$ is a double eigenvalue of $L_{t}(q),$
then by Remark 1 $L_{t}(q)$ has no other eigenvalues\textit{ }in\textit{
}$U(n,t,\rho)$ and hence (35) has no other roots. This implies that $\lambda$
is a double root of (35). By the same argument one can prove that if $\lambda$
is a double root of (35) then it is a double eigenvalue of $L_{t}(q)$
\end{proof}

One can readily verify that equation (35) can be written in the form
\begin{equation}
(\lambda-(2\pi n+t)^{2}-\frac{1}{2}(A+A^{\prime})+4\pi nt)^{2}=D,
\end{equation}
where
\begin{equation}
D(\lambda,t)=(4\pi nt)^{2}+q_{2n}q_{-2n}+8\pi ntC+C^{2}+q_{2n}B^{\prime
}+q_{-2n}B+BB^{\prime}%
\end{equation}
and, for brevity, we denote $C(\lambda,t),$ $\ B(\lambda,t),$ $\ A(\lambda,t)$
etc. by $C,$ $\ B,$ $A$ etc. It is clear that $\lambda$ is a root of (37) if
and only if \ it satisfies at least one of the equations
\begin{equation}
\lambda-(2\pi n+t)^{2}-\frac{1}{2}(A(\lambda,t)+A^{\prime}(\lambda,t))+4\pi
nt=-\sqrt{D(\lambda,t)}%
\end{equation}
and
\begin{equation}
\lambda-(2\pi n+t)^{2}-\frac{1}{2}(A(\lambda,t)+A^{\prime}(\lambda,t))+4\pi
nt=\sqrt{D(\lambda,t)},
\end{equation}
where
\begin{equation}
\sqrt{D}=\sqrt{\left\vert D\right\vert }e^{(\arg D)/2},\text{ }-\pi<\arg
D\leq\pi.
\end{equation}

\begin{remark}
It is clear from the construction of $D(\lambda,t)$ that this function is
continuous with respect to $(\lambda,t)$ for $t\in\lbrack0,\rho]$ and
$\lambda\in U(n,t,\rho).$ Moreover, by Remark 1 the eigenvalues $\lambda
_{n,1}(t)$ and $\lambda_{n,2}(t)$ continuously depend on $t\in\lbrack0,\rho].$
Therefore $D(\lambda_{n,j}(t),t)$ for $n>N$ and $j=1,2$ is a continuous
functions of $t\in\lbrack0,\rho].$ By (38), (23), (12), (16) and (19) we have
\[
D(\lambda_{n,j}(\rho),\rho)=(4\pi nt)^{2}+o(1),\text{ }A(\lambda_{n,j}%
(\rho),\rho)+A^{\prime}(\lambda_{n,j}(\rho),\rho)=o(1)
\]
as $n\rightarrow\infty.$ Therefore by (7) and \textit{Theorem 2 of [13]} the
eigenvalues $\lambda_{n,1}(\rho)$ and $\lambda_{n,2}(\rho)$ are simple,
$\lambda_{n,1}(\rho),$ satisfies (39) and $\lambda_{n,2}(\rho)$ satisfies
(40). If $\lambda_{n,1}(t)$ and $\lambda_{n,2}(t)$ are simple for $t\in\lbrack
t_{0},\rho],$ where $0\leq t_{0}\leq\rho,$ then these functions are analytic
function on $[t_{0},\rho]$ and $\lambda_{n,1}(t)\neq\lambda_{n,2}(t)$ for all
$t\in\lbrack t_{0},\rho]$.
\end{remark}

\begin{theorem}
Suppose that $\sqrt{D(\lambda_{n,j}(t),t))}$ continuously depends on $t$ at
$[t_{0},\rho]$ and
\begin{equation}
D(\lambda_{n,j}(t),t)\neq0,\text{ }\forall t\in\lbrack t_{0},\rho]
\end{equation}
for $n>N$ and $j=1,2,$ where $\rho$ and $N$ are defined in Remark \ 1 and
$\sqrt{D}$ is defined in (41) and $0\leq t_{0}\leq\rho$. Then for $t\in\lbrack
t_{0},\rho]$ the eigenvalues $\lambda_{n,1}(t)$ and $\lambda_{n,2}(t)$ defined
in Remark 1 are simple, $\lambda_{n,1}(t)$ satisfies (39) and $\lambda
_{n,2}(t),$ satisfies (40). That is
\begin{equation}
\lambda_{n,j}(t)=(2\pi n+t)^{2}+\frac{1}{2}(A(\lambda_{n,j},t)+A^{\prime
}(\lambda_{n,j},t))-4\pi nt+(-1)^{j}\sqrt{D(\lambda_{n,j},t)}%
\end{equation}
for $t\in\lbrack t_{0},\rho],$ $n>N$ and $j=1,2.$
\end{theorem}

\begin{proof}
By Remark 2, the eigenvalues $\lambda_{n,1}(\rho)$ and $\lambda_{n,2}(\rho)$
are simple, $\lambda_{n,1}(\rho)$ satisfies (39) and $\lambda_{n,2}(\rho)$
satisfies (40). Let us we prove that $\lambda_{n,1}(t)$ satisfies (39) for all
$t\in\lbrack t_{0},\rho]$. Suppose to the contrary that this claim is not
true. Then there exists $t\in\lbrack t_{0},\rho)$ and the sequences
$p_{n}\rightarrow t$ and $q_{n}\rightarrow t,$ where one of them may be a
constant sequence, such that $\lambda_{n,1}(p_{n})$ and $\lambda_{n,1}(q_{n})$
satisfy (39) and (40) respectively. Using the continuity of $\sqrt
{(D(\lambda_{n,j}(t),t))}$, we conclude that $\lambda_{n,1}(t)$ satisfies both
(39) and (40). However, it is possible only if $D(\lambda_{n,1}(t),t)=0$ which
contradicts (42). Hence $\lambda_{n,1}(t)$ satisfies (39) for all $t\in\lbrack
t_{0},\rho]$. In the same way we prove that $\lambda_{n,2}(t)$ satisfies (40)
for all $t\in\lbrack t_{0},\rho]$. If $\lambda_{n,1}(t)=\lambda_{n,2}(t)$ for
some value of $t\in\lbrack t_{0},\rho]$, that is if $\lambda_{n,j}(t)$ is a
double eigenvalue then it satisfies both (39) and (40) which again contradicts (42)
\end{proof}

\section{On the Operator $H_{t}$ for $t\in(-\pi,\pi].$}

In this section we study the operator $H_{t}$ for $t\in\lbrack0,\rho].$ Note
that we consider only the case $t\in\lbrack0,\rho]$ due to the following
reason. The case $t\in\lbrack\rho,\pi-\rho]$ was considered in [13]. \ The
case $t\in\lbrack\pi-\rho,\pi]$ is similar to the case $t\in\lbrack0,\rho]$
and we explain it in Remark 3. Besides, the eigenvalues of $H_{-t}$ coincides
with the eigenvalues of $H_{t}.$

When the potential $q$ has the form (4) then by (6)
\begin{equation}
q_{-1}=a,\text{ }q_{1}=b,\text{ }q_{n}=0,\text{ }\forall n\neq\pm1
\end{equation}
and hence formulas (11), (15), (37) and (38) have the form
\begin{equation}
(\lambda_{n,j}(t)-(2\pi n+t)^{2}-A(\lambda_{n,j}(t),t))u_{n,j}(t)=B(\lambda
_{n,j}(t),t)v_{n,j}(t),
\end{equation}%
\begin{equation}
(\lambda_{n,j}(t)-(-2\pi n+t)^{2}-A^{\prime}(\lambda_{n,j}(t),t))v_{n,j}%
(t)=B^{\prime}(\lambda_{n,j}(t),t)u_{n,j}(t),
\end{equation}%
\begin{equation}
(\lambda-(2\pi n+t)^{2}-\frac{1}{2}(A(\lambda,t)+A^{\prime}(\lambda,t))+4\pi
nt)^{2}=D(\lambda,t),
\end{equation}%
\begin{equation}
D(\lambda,t)=(4\pi nt+C(\lambda,t))^{2}+B(\lambda,t)B^{\prime}(\lambda,t).
\end{equation}
Moreover, by Theorem 2,\textit{ }$\lambda\in U(n,t,\rho)$ is a double
eigenvalue of $H_{t}$ if and only if it satisfies (47) and the equation
\begin{equation}
2(\lambda-(2\pi n+t)^{2}-\frac{1}{2}(A+A^{\prime})+4\pi nt)^{2}(1-\frac{1}%
{2}\frac{\partial}{\partial\lambda}(A+A^{\prime}))=\frac{\partial}%
{\partial\lambda}(D(\lambda,t)).
\end{equation}
By, (28) and (44) $\lambda_{n,j}(t)\in d^{-}(2Kn^{-1},t)\cup d^{+}%
(2Kn^{-1},t)\subset U(n,t,\rho).$ Therefore the formula
\begin{equation}
\lambda_{n,j}(t)=(2\pi n)^{2}+O(n^{-1})
\end{equation}
holds uniformly, with respect to $t\in\lbrack0,n^{-2}],$ for $j=1,2,$ i.e.,
there exist positive constants $M$ and $N$ such that $\mid\lambda
_{n,j}(t)-(2\pi n)^{2}\mid<Mn^{-1}$ for $n\geq N$ \ and $t\in\lbrack
0,n^{-2}].$

Let us consider the functions taking part in (45)-(48). From (44) we see that
the indices in formulas (13), (14) for the case (4) satisfy the conditions
\begin{equation}
\{n_{1},n_{2},...,n_{k}\}\subset\{-1,1\},\text{ }n_{1}+n_{2}+...+n_{s}%
\neq0,2n,
\end{equation}%
\begin{equation}
\{n_{1},n_{2},...,n_{k},2n-n_{1}-n_{2}-...-n_{k}\}\subset\{-1,1\},\text{
}n_{1}+n_{2}+...+n_{s}\neq0,2n
\end{equation}
for $s=1,2,...,k$ respectively. Hence, by (44) $q_{-n_{1}-n_{2}-...-n_{k}}=0$
if $k$ is an even number. Therefore, by (13) and (17)%
\begin{equation}
a_{2m}(\lambda,t)=0,\text{ }a_{2m}^{\prime}(\lambda,t)=0,\text{ }\forall
m=1,2,...
\end{equation}
Since the indices $n_{1},n_{2},...,n_{k}$ take two values (see (51)) the
number of the summands in the right-hand side of (13) is not more than
$2^{k}.$ Clearly, these summands for $k=2m-1$ have the form
\[
a_{k}(\lambda,n_{1},n_{2},...,n_{k},t)=:(ab)^{m}%
%TCIMACRO{\tprod \limits_{s=1,2,...,k}}%
%BeginExpansion
{\textstyle\prod\limits_{s=1,2,...,k}}
%EndExpansion
\left(  \lambda-(2\pi(n-n_{1}-n_{2}-...-n_{s})+t)^{2}\right)  ^{-1}%
\]
(see (13) and (51)). Therefore, we have
\begin{equation}
\text{ }a_{2m-1}(\lambda_{n,j}(t),t)=(4ab)^{m}O(n^{-2m+1}).
\end{equation}
If $t\in\lbrack0,n^{-2}],$ then one can readily see that
\[
a_{1}(\lambda_{n,j},t)=\frac{ab}{(2\pi n)^{2}+O(n^{-1})-(2\pi(n-1))^{2}}%
+\frac{ab}{(2\pi n)^{2}+O(n^{-1})-(2\pi(n+1))^{2}}%
\]%
\[
=\frac{ab}{2\pi(2\pi(2n-1)}-\frac{ab}{2\pi(2\pi(2n+1)}+O(\frac{1}{n^{3}%
})=O(\frac{1}{n^{2}}).
\]
The same estimations for $a_{2m-1}^{\prime}(\lambda_{n,j}(t),t)$ and
$a_{1}^{\prime}(\lambda_{n,j}(t),t)$ hold respectively. Thus, by (12), (16),
(19) and (53), we have
\begin{equation}
A(\lambda_{n,j}(t),t)=O(n^{-2}),\text{ }A^{\prime}(\lambda_{n,j}%
(t),t)=O(n^{-2}),\text{ }\forall t\in\lbrack0,n^{-2}].\text{ }%
\end{equation}

Now we study the functions $B(\lambda,t)$ and $B^{\prime}(\lambda,t)$ (see
(12), (14) and (16), (18)). First let us consider $b_{2n-1}(\lambda,t).$ If
$k=2n-1,$ then by (52) $n_{1}=n_{2}=...=n_{2k-1}=1.$ Using this and (44) in
(14) for $k=2n-1,$ we obtain
\begin{equation}
b_{2n-1}(\lambda,t)=b^{2n}%
%TCIMACRO{\tprod \limits_{s=1}^{2n-1}}%
%BeginExpansion
{\textstyle\prod\limits_{s=1}^{2n-1}}
%EndExpansion
\left(  \lambda-(2\pi(n-s)+t)^{2}\right)  ^{-1}.
\end{equation}
If\ $k<2n-1$ or $k=2m,$ then, by (44), $q_{2n-n_{1}-n_{2}-...-n_{k}}=0$ and by
(14)
\begin{equation}
\text{ }b_{k}(\lambda,t)=0.
\end{equation}
In the same way, from (18) we obtain
\begin{equation}
b_{2n-1}^{\prime}(\lambda,t)=a^{2n}%
%TCIMACRO{\tprod \limits_{s=1}^{2n-1}}%
%BeginExpansion
{\textstyle\prod\limits_{s=1}^{2n-1}}
%EndExpansion
\left(  \lambda-(2\pi(n-s)-t)^{2}\right)  ^{-1},\text{ \ }b_{k}^{\prime
}(\lambda_{n,j}(t),t)=0
\end{equation}
for\ $k<2n-1$ or $k=2m.$ Now, (19), (57) and (58) imply that the equalities
\begin{equation}
B(\lambda,t)=O\left(  n^{-5}\right)  ,\text{ }B^{\prime}(\lambda,t)=O\left(
n^{-5}\right)
\end{equation}
hold uniformly for $t\in\lbrack0,\rho]$ and $\lambda\in U(n,t,\rho).$ From
(45) and (46) (if $\left\vert u_{n,j}(t)\right\vert \geq\left\vert
v_{n,j}(t)\right\vert $ then use (45) and if $\left\vert v_{n,j}(t)\right\vert
>\left\vert u_{n,j}(t)\right\vert $ then use (46)) by using (55) and (59) we
obtain that the formula
\begin{equation}
\lambda_{n,j}(t)=(2\pi n)^{2}+O(n^{-2})
\end{equation}
holds uniformly, with respect to $t\in\lbrack0,n^{-3}],$ for $j=1,2.$

More detail estimations of $B$ and $B^{\prime}$ are given in the following lemma.

\begin{lemma}
If $q$\ has the form (4), then the formulas
\begin{equation}
B(\lambda,t)=\beta_{n}\left(  1+O(n^{-2})\right)  ,\text{ }B^{\prime}%
(\lambda,t)=\alpha_{n}\left(  1+O(n^{-2})\right)  ,
\end{equation}%
\begin{equation}
\frac{\partial}{\partial\lambda}(B^{\prime}(\lambda,t)B(\lambda,t))\sim
\alpha_{n}\beta_{n}n^{-1}\ln\left\vert n\right\vert
\end{equation}
hold uniformly for
\begin{equation}
t\in\lbrack0,n^{-3}],\text{ }\lambda=(2\pi n)^{2}+O(n^{-2}),
\end{equation}
where $\beta_{n}=b^{2n}\left(  (2\pi)^{2n-1}(2n-1)!\right)  ^{-2}$ and
$\alpha_{n}=a^{2n}\left(  (2\pi)^{2n-1}(2n-1)!\right)  ^{-2}.$
\end{lemma}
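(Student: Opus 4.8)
The plan is to exploit the explicit form (42) of the Fourier coefficients, which collapses the multiple series (12)--(14) defining $B(\lambda,t)$ into a single chain. By (56), $b_k(\lambda,t)=0$ unless $k$ is odd and $k\geq 2n-1$, so
\[
B(\lambda,t)=b_{2n-1}(\lambda,t)+b_{2n+1}(\lambda,t)+\sum_{k\geq 2n+3,\ k\ \mathrm{odd}}b_k(\lambda,t),
\]
with $b_{2n-1}$ given explicitly by (55). I would first evaluate the leading term $b_{2n-1}$ and then show that everything else is $\beta_n O(n^{-2})$; the argument for $B^{\prime}$ is identical with $a$ in place of $b$, $-t$ in place of $t$, and (57) in place of (55).

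For the leading term I would insert $\lambda=(2\pi n)^2+O(n^{-2})$ and $t\in[0,n^{-3}]$ into each factor of (55). Expanding $(2\pi(n-s)+t)^2$ and using $(2\pi n)^2-(2\pi(n-s))^2=(2\pi)^2 s(2n-s)$, together with the fact that both the $t$-terms and the $O(n^{-2})$ shift contribute only $O(n^{-2})$ in absolute value while $s(2n-s)\geq 2n-1$, gives
\[
\lambda-(2\pi(n-s)+t)^2=(2\pi)^2 s(2n-s)\bigl(1+O(n^{-3})\bigr)
\]
uniformly in $s\in\{1,\dots,2n-1\}$. Taking the reciprocal product and using $\prod_{s=1}^{2n-1}s(2n-s)=((2n-1)!)^2$ together with $\prod_{s=1}^{2n-1}(1+O(n^{-3}))=1+O(n^{-2})$ yields $b_{2n-1}(\lambda,t)=\beta_n(1+O(n^{-2}))$.

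To control the remainder I would bound the far tail by (27), which gives $\sum_{k\geq 2n+3}|b_k|=b_{2n-1}O(n^{-2})=\beta_n O(n^{-2})$ (the slightly enlarged range $t\in[0,n^{-3}]$ only perturbs each factor by the same negligible $O(n^{-3})$ relative amount, so the estimate of [5] persists). The one term not covered by (27) is $b_{2n+1}$, which I would estimate directly: the constraint (50) forces all but one of the indices $n_s$ to equal $+1$, and a short check of which partial sums avoid $0$ and $2n$ leaves exactly the $2n-2$ admissible positions $p=3,\dots,2n$ for the single $-1$. For each such $p$ the partial sums realize the values $\{1,\dots,2n-1\}$ with $p-2$ and $p-1$ doubled, so the resolvent product differs from that of $b_{2n-1}$ by the two extra factors centered at $P=p-2,p-1$, and $|b_{2n+1,p}/b_{2n-1}|$ is comparable to $|ab|(2\pi)^{-4}[(p-2)(p-1)(2n-p+1)(2n-p+2)]^{-1}$. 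Summing over $p$ (the dominant contribution coming from $p$ near $3$ and near $2n$) gives $O(n^{-2})$, hence $b_{2n+1}=\beta_n O(n^{-2})$. Combining the three pieces proves (59).

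For the derivative (60) I would use the product rule,
\[
\frac{\partial}{\partial\lambda}(B^{\prime}B)=B^{\prime}\,\frac{\partial B}{\partial\lambda}+B\,\frac{\partial B^{\prime}}{\partial\lambda},
\]
and note that all summands of $B$ and $B^{\prime}$ are analytic on the disk $|\lambda-(2\pi n)^2|<1$, since every factor $\lambda-(2\pi(n-P_s)+t)^2$ there has modulus $\gtrsim n$ (the partial sums satisfy $P_s\neq 0,2n$). Hence, by (59) and Cauchy's estimate, the $\lambda$-derivative of the remainder $b_{2n+1}+\sum_{k\geq 2n+3}b_k$ is $\beta_n O(n^{-2})$, so the derivative of $B$ is governed by that of $b_{2n-1}$, namely $\partial_\lambda b_{2n-1}=-\,b_{2n-1}\sum_{s=1}^{2n-1}(\lambda-(2\pi(n-s)+t)^2)^{-1}$. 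The crux is this logarithmic-derivative sum: to leading order it equals $\frac{1}{(2\pi)^2}\sum_{s=1}^{2n-1}\frac{1}{s(2n-s)}=\frac{1}{(2\pi)^2 n}\sum_{s=1}^{2n-1}\frac1s\sim\frac{\ln n}{n}$, via the partial fraction $\frac{1}{s(2n-s)}=\frac{1}{2n}(\frac1s+\frac1{2n-s})$; this is exactly where the factor $\ln n/n$ in (60) originates. Since this quantity is real, positive and of order $\ln n/n$ (and the same holds for $B^{\prime}$ with $-t$ in place of $t$), the two terms in the product rule have the same sign and add rather than cancel, giving $\partial_\lambda(B^{\prime}B)\sim\alpha_n\beta_n(\ln n/n)$. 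The main obstacle is precisely this last relation: one must verify both the upper and the lower bound in $\sim$, i.e. that the harmonic-type sum genuinely has order $\ln n/n$ (not smaller), that the $\beta_n O(n^{-2})$ contributions of the tail and of $b_{2n+1}$ do not spoil the lower bound after differentiation, and that no cancellation occurs between $B^{\prime}\partial_\lambda B$ and $B\partial_\lambda B^{\prime}$.
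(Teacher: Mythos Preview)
Your proposal is correct and follows essentially the same architecture as the paper's proof: the same decomposition $B=b_{2n-1}+b_{2n+1}+\sum_{k\geq 2n+3}b_k$, the same structural analysis of $b_{2n+1}$ (one index equal to $-1$, giving two extra resolvent factors and an $O(n^{-2})$ sum), the same appeal to (27) for the tail, and the same use of Cauchy's estimate to control the $\lambda$-derivative of the remainder while computing $\partial_\lambda b_{2n-1}$ as $b_{2n-1}$ times a logarithmic sum $\sim \ln n/n$. The only minor variation is in the treatment of $b_{2n-1}(\lambda,t)$: you expand each factor directly as $(2\pi)^2 s(2n-s)(1+O(n^{-3}))$ and multiply, whereas the paper reaches the same conclusion via a sandwich inequality (63)--(64) reducing to $t=0$ and then a Taylor expansion about $\lambda=(2\pi n)^2$.
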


\begin{proof}
Using (56) and (58) by direct calculations we get
\begin{equation}
b_{2n-1}((2\pi n)^{2},0)=\beta_{n},\text{ }b_{2n-1}^{\prime}((2\pi
n)^{2},0)=\alpha_{n}.
\end{equation}
If $1\leq s\leq2n-1$ then for any $(\lambda,t)$ satisfying (63) there exists
$\lambda_{1}=(2\pi n)^{2}+O(n^{-2})$ and

$\lambda_{2}=(2\pi n)^{2}+O(n^{-2})$ such that
\begin{equation}
\mid\lambda_{1}-(2\pi(n-s))^{2}\mid<\mid\lambda-(2\pi(n-s)+t)^{2}\mid
<\mid\lambda_{2}-(2\pi(n-s))^{2}\mid.
\end{equation}
Therefore from (56) we obtain that
\begin{equation}
\left\vert b_{2n-1}(\lambda_{1},0)\right\vert <\left\vert b_{2n-1}%
(\lambda,t)\right\vert <\left\vert b_{2n-1}(\lambda_{2},0)\right\vert .
\end{equation}
On the other hand, differentiating (56) with respect to $\lambda,$ we conclude
that
\begin{equation}
\frac{\partial}{\partial\lambda}(b_{2n-1}((2\pi n)^{2},0))=b_{2n-1}((2\pi
n)^{2},0)\sum_{s=1}^{2n-1}\frac{1+O(n^{-1})}{s(2n-s)}.
\end{equation}
Now taking into account that the last summation is of order $n^{-1}%
\ln\left\vert n\right\vert $ and using (64), we get
\begin{equation}
\frac{\partial}{\partial\lambda}b_{2n-1}((2\pi n)^{2},0))\sim\beta_{n}%
n^{-1}\ln\left\vert n\right\vert .\text{ }%
\end{equation}
Arguing as above one can easily see that the $m$-th derivative, where
$m=2,3,...,$ of $b_{2n-1}(\lambda,0)$ is $O(\beta_{n}).$ Hence using the
Taylor series of $b_{2n-1}(\lambda,0)$ for $\lambda=(2\pi n)^{2}+O(n^{-2})$
about $(2\pi n)^{2},$ we obtain $b_{2n-1}(\lambda_{i},0)=\beta_{n}%
(1+O(n^{-2})),$ $\forall i=1,2.$ This with (66) yields%
\begin{equation}
b_{2n-1}(\lambda,t)=\beta_{n}(1+O(n^{-2}))
\end{equation}
for all $(\lambda,t)$ satisfying (63). In the same way, we get
\begin{equation}
\frac{\partial}{\partial\lambda}b_{2n-1}^{\prime}((2\pi n)^{2},0))\sim
\alpha_{n}(\frac{\ln n}{n}),\text{ }b_{2n-1}^{\prime}(\lambda,t)=\alpha
_{n}(1+O(n^{-2})).\text{ }%
\end{equation}

Now let us consider $b_{2n+1}(\lambda,t).$ By (52) the indices $n_{1}%
,n_{2},...,n_{2n+1}$ taking part in $b_{2n+1}(\lambda,t)$ are $\ 1$ except
one, say $n_{s+1}=-1,$ where $s=2,3,...,2n-1.$ Moreover, if $n_{s+1}=-1,$ then
$n_{1}+n_{2}+...+n_{s+1}=n_{1}+n_{2}+...+n_{s-1}=s-1$ and

$n_{1}+n_{2}+...+n_{s+2}=n_{1}+n_{2}+...+n_{s}=s.$ Therefore, by (14),
$b_{2n+1}(\lambda,t)$ for
\begin{equation}
\lambda=(2\pi n)^{2}+O(n^{-1}),\text{ }t\in\lbrack0,n^{-3}]
\end{equation}
has the form
\[
b_{2n-1}(\lambda,t)%
%TCIMACRO{\tsum \limits_{s=2}^{2n-1}}%
%BeginExpansion
{\textstyle\sum\limits_{s=2}^{2n-1}}
%EndExpansion
\frac{ab}{(2\pi n)^{2}-(2\pi(n-s+1))^{2}+O(n^{-1}))(2\pi n)^{2}-(2\pi
(n-s))^{2}+O(n^{-1}))}.
\]
One can easily see that the last sum is $O(n^{-2}).$ Thus we have
\begin{equation}
b_{2n+1}(\lambda,t)=b_{2n-1}(\lambda,t)O(n^{-2})=\beta_{n}O(n^{-2}))
\end{equation}
for all $(\lambda,t)$ satisfying (71).

Now let us estimate $b_{k}(\lambda,t)$ for $k>2n+1$. Since the sums in (14)
are taken under conditions (52), we conclude that $1\leq n_{1}+n_{2}%
+\cdots+n_{s}\leq2n-1.$ Using this instead of $1\leq s\leq2n-1$ and repeating
the proof of (66) we obtain that for any $(\lambda,t)$ satisfying (71) there
exists $\lambda_{3}=(2\pi n)^{2}+O(n^{-1})$ and $\lambda_{4}=(2\pi
n)^{2}+O(n^{-1})$ such that%
\[
\left\vert p_{n_{1},n_{2},...,n_{k}}(\lambda_{3},0)\right\vert <\left\vert
p_{n_{1},n_{2},...,n_{k}}(\lambda,t)\right\vert <\left\vert p_{n_{1}%
,n_{2},...,n_{k}}(\lambda_{4},0)\right\vert ,\text{ }\forall k<2n-1,
\]
where $p_{n_{1},n_{2},...,n_{k}}(\lambda,0)$ is defined in (26). This with
(27) and (72) implies that
\begin{equation}
\sum_{k=2n+1}^{\infty}\left\vert b_{k}(\lambda,t)\right\vert =\beta
_{n}O(n^{-2})
\end{equation}
for all $(\lambda,t)$ satisfying (71). In the same way, we obtain
\begin{equation}
\sum_{k=2n+1}^{\infty}\left\vert b_{k}^{\prime}(\lambda,t)\right\vert
=\alpha_{n}O(n^{-2}).
\end{equation}
Thus (61) follows from (69), (70), (73) and (74).

Now we prove (62). It follows from (73), (74) and the Cauchy inequality that
\begin{equation}
\frac{\partial}{\partial\lambda}(\sum_{k=2n+1}^{\infty}b_{k}(\lambda
,t))=\beta_{n}O(n^{-1}),\text{ }\frac{\partial}{\partial\lambda}(\sum
_{k=2n+1}^{\infty}b_{k}^{\prime}(\lambda,t))=\alpha_{n}O(n^{-1}).
\end{equation}
Therefore (62) follows from (68) and (70).
\end{proof}

In the case (4) formula (61) with (44) implies that the inequality (29) for
$\lambda=\lambda_{n,j}(0)$ holds and the relation (30) holds if and only if
$\mid a\mid=\mid b\mid.$ On the other hand, the large eigenvalues of $H_{0}$
and $H_{\pi}$ are simple (see, for example, theorems 11 and 12 of [16] and
Theorem 1 of [15]). Therefore, taking into account that, if the number of
multiple eigenvalues is finite then Riesz basis property of the root functions
follows from the Riesz basis property of the normal system of EAF we get the
following consequence of Theorem 1.

\begin{corollary}
The root functions of $H_{0}$ form a Riesz basis if and only if $\mid
a\mid=\mid b\mid.$ \ The statement continue to hold if $H_{0}$ is replaced by
$H_{\pi}$ which can be proved in the same way.
\end{corollary}

These result were obtained by Djakov and Mitjagin [3] by the other method.

From Lemma 1 it easily follows also the following statement.

\begin{proposition}
If $\lambda_{n,j}(t)$ for $t\in\lbrack0,\rho]$ is a multiple eigenvalue of
$H_{t}$ then
\begin{equation}
(4\pi nt)^{2}=-\beta_{n}\alpha_{n}\left(  1+O(n^{-2})\right)  .
\end{equation}

\end{proposition}

\begin{proof}
If $\lambda_{n,1}(t)=\lambda_{n,2}(t)=:\lambda_{n}(t)$ is a multiple
eigenvalue, then as it is noted in the beginning of this section, it satisfies
(47) and (49) from which we obtain%
\begin{equation}
4D(\lambda_{n}(t),t)\left(  1-\frac{1}{2}\frac{\partial}{\partial\lambda
}(A(\lambda_{n}(t),t)+A^{\prime}(\lambda_{n}(t),t))\right)  ^{2}=\left(
\frac{\partial}{\partial\lambda}D(\lambda_{n}(t),t)\right)  ^{2}.
\end{equation}
By (21) and (23) we have
\begin{equation}
\frac{\partial}{\partial\lambda}(A(\lambda_{n}(t),t)+A^{\prime}(\lambda
_{n}(t),t))=O(n^{-2}),
\end{equation}%
\begin{align}
(4\pi nt+C(\lambda_{n}(t),t))^{2}  &  =(4\pi nt)^{2}(1+O(n^{-2})),\text{ }\\
\frac{\partial}{\partial\lambda}(4\pi nt+C(\lambda_{n}(t),t))^{2}  &  =(4\pi
nt)^{2}(1+O(n^{-2}))O(n^{-3})
\end{align}
for $t\in\lbrack0,\rho]$. On the other hand it follows from (59) and (22) that%
\begin{equation}
B(\lambda_{n}(t),t)B^{\prime}(\lambda_{n}(t),t)=O(n^{-10}),\text{ }%
\frac{\partial}{\partial\lambda}(B^{\prime}(\lambda_{n}(t),t)B(\lambda
_{n}(t),t))=O(n^{-7}).
\end{equation}
Therefore from (48) and (79)-(81) we obtain
\begin{equation}
D((\lambda_{n}(t),t))=(4\pi nt)^{2}(1+O(n^{-2}))+O(n^{-10})
\end{equation}
and
\begin{equation}
\frac{\partial}{\partial\lambda}(D(\lambda_{n}(t),t))=(4\pi nt)^{2}%
(1+O(n^{-2}))O(n^{-3})+O(n^{-7}).
\end{equation}
Using the equalities (78), (82) and (83) in (77) we get
\begin{equation}
4(4\pi nt)^{2}(1+O(n^{-2}))=(4\pi nt)^{2}O(n^{-4})+O(n^{-8}).
\end{equation}
Hence, we have $t\in\lbrack0,n^{-3}].$ Then by (60), $t$ and $\lambda
=:\lambda_{n}(t)$ satisfy (63) and by Lemma 1
\begin{equation}
B(\lambda_{n}(t),t)=\beta_{n}\left(  1+O(n^{-2})\right)  ,\text{ }B^{\prime
}(\lambda_{n}(t),t)=\alpha_{n}\left(  1+O(n^{-2})\right)  ,
\end{equation}%
\begin{equation}
\frac{\partial}{\partial\lambda}(B^{\prime}(\lambda_{n}(t),t)B(\lambda
_{n}(t),t))\sim\alpha_{n}\beta_{n}n^{-1}\ln\left\vert n\right\vert .
\end{equation}
Therefore by (48), (79) and (80) we have
\begin{equation}
D((\lambda_{n}(t),t))=(4\pi nt)^{2}(1+O(n^{-2}))+\beta_{n}\alpha_{n}\left(
1+O(n^{-2})\right)
\end{equation}
and
\begin{equation}
\frac{\partial}{\partial\lambda}(D(\lambda_{n}(t),t))=(4\pi nt)^{2}%
(1+O(n^{-2}))O(n^{-3})+O(\alpha_{n}\beta_{n}n^{-1}\ln\left\vert n\right\vert
).
\end{equation}
Now using (78), (87) and (88) in (77) we obtain
\[
(4\pi nt)^{2}(1+O(n^{-2}))+\beta_{n}\alpha_{n}\left(  1+O(n^{-2})\right)
=(4\pi nt)^{2}O(n^{-4})+\left(  O(\alpha_{n}\beta_{n}n^{-1}\ln\left\vert
n\right\vert \right)  )^{2}%
\]
which implies (76)
\end{proof}

Now we are ready to prove the main result of this section by using Theorem 3.
In formula (87) the terms $O(n^{-2})$ do not depend on $t$, that is, there
exists $c>0$ such that these terms satisfy the inequality
\begin{equation}
\left\vert O(n^{-2})\right\vert <cn^{-2}.
\end{equation}

\begin{theorem}
Let $\mathbb{S}$ be the set of integer $n>N$ such that
\begin{equation}
-\pi+3cn^{-2}\leq\arg(\beta_{n}\alpha_{n})\leq\pi-3cn^{-2}%
\end{equation}
and $\left\{  t_{n}:n>N\right\}  $ be a sequence defined as follows: $t_{n}=0$
if $n\in$ $\mathbb{S}$ and
\begin{equation}
(4\pi nt_{n})^{2}(1-cn^{-2})=-(1+cn^{-2}+n^{-3})\operatorname{Re}(\beta
_{n}\alpha_{n})
\end{equation}
if $n\notin$ $\mathbb{S},$ where $c$ is defined in (89). Then the eigenvalues
$\lambda_{n,1}(t)$ and $\lambda_{n,2}(t)$ defined in Remark 1 are simple and
satisfy (43) for $t\in\lbrack t_{n},\rho].$
\end{theorem}

\begin{proof}
Let $n\notin$ $\mathbb{S}.$ It follows from (48), (59) and (79) that if $t\geq
n^{-3}$ then
\begin{equation}
\operatorname{Re}D(\lambda_{n,j}(t),t))>0.
\end{equation}
If $t\in\lbrack0,n^{-3}]$ then we have formula (87). Since the terms
$O(n^{-2})$ in (87) satisfy (89) we have the following estimate for the real
part of the first term in the right-hand side of (87):
\begin{equation}
\operatorname{Re}((4\pi nt)^{2}(1+O(n^{-2})))>(4\pi nt)^{2}(1-cn^{-2}%
)\geq(4\pi nt_{n})^{2}(1-cn^{-2})
\end{equation}
for $t\in\lbrack t_{n},n^{-3}].$ On the other hand if $n\notin$ $\mathbb{S}$
then by the definition of $\mathbb{S}$ (90) does not hold, which implies that
$\beta_{n}\alpha_{n}=-\left\vert (\beta_{n}\alpha_{n})\right\vert e^{i\theta
},$ $\left\vert \theta\right\vert <3cn^{-2}$ and hence
\begin{equation}
\operatorname{Im}(\beta_{n}\alpha_{n})=O(n^{-2})\operatorname{Re}(\beta
_{n}\alpha_{n}).
\end{equation}
Using this and (89), we obtain the following estimate for the real part of the
second term in the right-hand side of (87):%
\[
\left\vert \operatorname{Re}(\beta_{n}\alpha_{n}\left(  1+O(n^{-2})\right)
)\right\vert <(1+cn^{-2}+n^{-3})\left\vert \operatorname{Re}(\beta_{n}%
\alpha_{n})\right\vert .
\]
Therefore it follows from (93), (91) and (87) that (92) holds for $t\in\lbrack
t_{n},n^{-3}]$ , $n>N$ and $n\notin$ $\mathbb{S}.$ Thus (92) is true for all
$t\in\lbrack t_{n},\rho]$. Hence $\sqrt{D(\lambda_{n,j}(t),t))}$ is
well-defined and by Remark 2 it continuously depends on $t$. Therefore the
proof follows from Theorem 3.

Now consider the case $n\in$ $\mathbb{S}.$ By (89) we have
\[
-cn^{-2}-n^{-3}<\arg(1+O(n^{-2}))<cn^{-2}+n^{-3}%
\]
Using (94) and (89) we obtain
\[
-\pi+2cn^{-2}-n^{-3}<\arg(\beta_{n}\alpha_{n}\left(  1+O(n^{-2})\right)
)<\pi-2cn^{-2}+n^{-3},
\]%
\[
-cn^{-2}-n^{-3}<\arg((4\pi nt)^{2}(1+O(n^{-2})))<cn^{-2}+n^{-3}%
\]
and the acute angle between the vectors $(4\pi nt)^{2}((1+O(n^{-2}))$ and
$\beta_{n}\alpha_{n}\left(  1+O(n^{-2})\right)  $ is less than $\pi.$
Therefore by the parallelogram law of vector addition we have
\[
-\pi<\arg(D(\lambda_{n,j}(t),t))<\pi,\text{ }D(\lambda_{n,j}(t),t))\neq0
\]
for $t\in\lbrack0,\rho].$ Thus the proof again follows from Theorem 3
\end{proof}

\begin{corollary}
Suppose that
\begin{equation}
\text{ }\inf_{q,p\in\mathbb{N}}\{\mid2q\alpha-(2p-1)\mid\}\neq0,
\end{equation}
where $\alpha=\pi^{-1}\arg(ab).$ Then for all $n>N$ the eigenvalues
$\lambda_{n,1}(t)$ and $\lambda_{n,2}(t)$ defined in Remark 1 are simple and
satisfy (43) for $t\in\lbrack0,\rho]$.
\end{corollary}

\begin{proof}
By (95) that there exists $\varepsilon>0$ such that $-\pi+\varepsilon
<\arg((ab)^{2n})<\pi-\varepsilon$ for all $n\in\mathbb{N}.$ Hence by the
definition of $\beta_{n}$ and $\alpha_{n}$ (see Lemma 1)
\begin{equation}
-\pi+\varepsilon<\arg(\alpha_{n}\beta_{n})<\pi-\varepsilon,
\end{equation}
that is, (90) holds for all $n>N.$ Therefore the proof follows from Theorem 4
\end{proof}

\begin{remark}
Let $\widetilde{A},$ $\widetilde{B},$ $\widetilde{A}^{\prime}$, $\widetilde
{B}^{\prime}$ and $\widetilde{C}$ be the functions obtained from $A,$ $B,$
$A^{\prime},$ $B^{\prime}$ and $C$ by replacing $a_{k},a_{k}^{\prime}%
,b_{k},b_{k}^{\prime}$ $\ $with $\widetilde{a}_{k},\widetilde{a}_{k}^{\prime
},\widetilde{b}_{k},\widetilde{b}_{k}^{\prime}$ , where $\widetilde{a}%
_{k},\widetilde{a}_{k}^{\prime},\widetilde{b}_{k},\widetilde{b}_{k}^{\prime}$
differ from $a_{k},a_{k}^{\prime},b_{k},b_{k}^{\prime}$ respectively, in the
following sense. The sums in the expressions for $\widetilde{a}_{k}%
,\widetilde{a}_{k}^{\prime},\widetilde{b}_{k},\widetilde{b}_{k}^{\prime}$ are
taken under condition $n_{1}+n_{2}+...+n_{s}\neq0,\pm(2n+1)$ instead of the
condition $n_{1}+n_{2}+...+n_{s}\neq0,\pm2n$ for $s=1,2,...,k.$ In
$\widetilde{b}_{k},\widetilde{b}_{k}^{\prime}$ the multiplicand $q_{\pm
2n-n_{1}-n_{2}-...-n_{k}}$ of $b_{k},b_{k}^{\prime}$ is replaced by
$q_{\pm(2n+1)-n_{1}-n_{2}-...-n_{k}}$. \ To consider the case $t\in\lbrack
\pi-\rho,\pi]$ instead of (11), (15) we use%
\begin{equation}
(\lambda_{n,j}(t)-(2\pi n+t)^{2}-\widetilde{A}(\lambda_{n,j}(t),t))u_{n,j}%
(t)=(q_{2n+1}+\widetilde{B}(\lambda_{n,j}(t),t))v_{n,j}(t),
\end{equation}%
\begin{equation}
(\lambda_{n,j}(t)-(-2\pi(n+1)+t)^{2}-\widetilde{A}^{\prime}(\lambda
_{n,j}(t),t))v_{n,j}(t)=(q_{-2n-1}+\widetilde{B}^{\prime}(\lambda
_{n,j}(t),t))u_{n,j}(t)
\end{equation}
and repeat the investigation of the case $t\in\lbrack0,\rho]$. Note that
instead of (9) for $k\neq0,2n$ using the same inequality for $k\neq0,2n+1$ and
$t\in\lbrack\pi-\rho,\pi]$ from (10) we obtain (97) and (98) instead of (11)
and (15). \ In the case $t\in\lbrack\pi-\rho,\pi]$ \ instead of (43) we
obtain
\begin{equation}
\lambda_{n,j}(t)=(2\pi n+t)^{2}-2\pi(2n+1)(t-\pi)+\frac{1}{2}(\widetilde
{A}^{^{\prime}}+\widetilde{A})+(-1)^{j}\sqrt{\widetilde{D}(\lambda
_{n,j}(t),t))},
\end{equation}
where $\widetilde{D}=\left(  2\pi(2n+1)(t-\pi)+\widetilde{C}\right)
^{2}+\widetilde{B}$ $\widetilde{B}^{\prime}.$ Similarly, instead of (61),
(76), (91) and (95) we obtain respectively the following relations
\[
\widetilde{B}(\lambda,t)=\widetilde{\beta}_{n}\left(  1+O(n^{-2})\right)
,\text{ }\widetilde{B}^{\prime}(\lambda,t)=\widetilde{\alpha}_{n}\left(
1+O(n^{-2})\right)  ,
\]%
\[
\left(  2\pi(2n+1)(t-\pi)\right)  ^{2}=-\widetilde{\beta}_{n}\widetilde
{\alpha}_{n}\left(  1+O(n^{-2})\right)  ,
\]%
\[
(2\pi(2n+1)(\widetilde{t}_{n}-\pi))^{2}(1-cn^{-2})=-(1+cn^{-2}+n^{-3}%
)\operatorname{Re}(\widetilde{\beta}_{n}\widetilde{\alpha}_{n}),
\]%
\[
\text{ }\inf_{q,p\in\mathbb{N}}\{\mid(2q+1)\alpha-(2p-1)\mid\}\neq0,
\]
where $\widetilde{\beta}_{n}=b^{2n+1}\left(  (2\pi)^{2n}(2n)!\right)  ^{-2}$,
$\widetilde{\alpha}_{n}=a^{2n+1}\left(  (2\pi)^{2n}(2n)!\right)  ^{-2},$
$\widetilde{t}_{n}\in\lbrack\pi-\rho,\pi]$ and Proposition 1, Theorem 4,
Corollary 2 continue to hold under the corresponding replacement.

As we noted in Section 2 (see \textit{Theorem 2 of [13] and Remark 1}) the
large eigenvalues of $H_{t}$ for $t\in\lbrack\rho,\pi-\rho]$ consist of the
simple eigenvalues $\lambda_{n}(t)$ for $\left\vert n\right\vert >N$
satisfying the, \textit{uniform with respect to }$t$\textit{ in }$[\rho
,\pi-\rho],$\textit{ asymptotic formula (5).} Thus by Theorem 4 and by the
just noted similar investigation, the eigenvalues $\lambda_{n,j}(t)$ for
$n>N,$ $j=1,2$ and $t\in$ $[t_{n},\rho]\cup$ $[\pi-\rho,\widetilde{t}_{n}]$
and the eigenvalues $\lambda_{n}(t)$ \textit{for \ }$t\in\lbrack\rho,\pi
-\rho]$\textit{ and }$\left\vert n\right\vert >N$\textit{ are simple.}. These
eigenvalues satisfy (43), (5) and (99) for $t\in$ $[t_{n},\rho],$ $t\in
\lbrack\rho,\pi-\rho]$ and $t\in\lbrack\pi-\rho,\widetilde{t}_{n}]$ respectively.
\end{remark}

\section{Asymptotic Analysis of H}

In this section we investigate the operator $H$ generated in $L_{2}%
(-\infty,\infty)$ by (1) when the potential $q$ has the form (4). Since the
spectrum $S(H)$ of $H$ is the union of the spectra $S(H_{t})$ of the operators
$H_{t}$ for $t\in(-\pi,\pi],$ due to the notations of Remark 1, $\Gamma_{n}$
for $n>N$ are the part of $S(H)$ lying in the neighborhood of infinity. Here
we consider only this part of the spectrum.

Following \ [7, 12], we define the projections and the spectral singularities
of $H$ as follows. A closed arc $\gamma=:\{z\in\mathbb{C}:z=\lambda
(t),t\in\lbrack\alpha,\beta]\}$ with $\lambda(t)$ continuous on the closed
interval $[\alpha,\beta]$, analytic in an open neighborhood of $[\alpha
,\beta]$ and $F(\lambda(t))=2\cos t,$
\[
\text{ }\frac{\partial F(\lambda(t))}{\partial\lambda}\neq0,\text{ }\forall
t\in\lbrack\alpha,\beta],\text{ }\lambda^{^{\prime}}(t)\neq0,\text{ }\forall
t\in(\alpha,\beta)
\]
is called a regular spectral arc of $H,$ where $F(\lambda)$ is defined in (3).
The projection $P(\gamma)$ corresponding to the regular spectral arc $\gamma$
is defined by
\begin{equation}
P(\gamma)f=\frac{1}{2\pi}%
%TCIMACRO{\tint \limits_{\gamma}}%
%BeginExpansion
{\textstyle\int\limits_{\gamma}}
%EndExpansion
(\Phi_{+}(x,\lambda)F_{-}(\lambda,f)+\Phi_{-}(x,\lambda)F_{+}(\lambda
,f))\frac{\varphi(1,\lambda)}{p(\lambda)}d\lambda,
\end{equation}
where $\Phi_{\pm}(x,\lambda)=:\theta(x,\lambda)+(\varphi(1,\lambda
))^{-1}(e^{\pm it}-\theta(1,\lambda))\varphi(x,\lambda)$ is the Floquet
solution,
\[
F_{\pm}(\lambda,f)=\int_{\mathbb{R}}f(x)\Phi_{\pm}(x,\lambda)dx,
\]
$p(\lambda)=\sqrt{4-F^{2}(\lambda)}$ and the functions $\theta(x,\lambda)$ and
$\varphi(x,\lambda)$ are defined in (3). Moreover, for the norm of the
projections $P(\gamma)$ corresponding to the regular spectral arc $\gamma$
defined in (100) we use the formula
\begin{equation}
\parallel P(\gamma)\parallel=\sup_{t\in\lbrack\alpha,\beta]}\mid
d(\lambda(t))\mid^{-1}%
\end{equation}
of [12] (see Theorem 2 of [12] or Proposition 1 of [17]), where $d(\lambda
(t))=(\Psi_{\lambda(t)},\Psi_{\lambda(t)}^{\ast}),$ $\Psi_{\lambda(t)}$ and
$\Psi_{\lambda(t)}^{\ast}$ are the normalized eigenfunctions of $H_{t}$ and
$H_{t}^{\ast}$ corresponding to $\lambda(t)$ and $\overline{\lambda(t)}$
respectively. In [14] and [17] the following definitions were given

\begin{definition}
We say that $\lambda\in S(H)$ is a spectral singularity of $H$ if for all
$\varepsilon>0$\ there exists a sequence $\{\gamma_{n}\}$ of the regular
spectral arcs $\gamma_{n}\subset\{z\in\mathbb{C}:\mid z-\lambda\mid
<\varepsilon\}$ such that
\begin{equation}
\lim_{n\rightarrow\infty}\parallel P(\gamma_{n})\parallel=\infty.
\end{equation}

\end{definition}

\begin{definition}
We say that the operator $H$ has a spectral singularity at infinity if there
exists a sequence $\{\gamma_{n}\}$ of the regular spectral arcs such that
$d(0,\gamma_{n})\rightarrow\infty$ as $n\rightarrow\infty$ and (102) holds,
where $d(0,\gamma_{n})$ is the distance from the point $(0,0)$ to the arc
$\gamma_{n}.$
\end{definition}

\begin{definition}
The operator $H$ is said to be an asymptotically spectral operator if there
exists a positive constant $C$ such that
\[
\sup_{\sigma\in R(C)}(ess\sup_{t\in(-\pi,\pi]}\parallel e(t,\sigma
)\parallel)<\infty,
\]
where $R(C)$ is the ring consisting of all sets which are the finite union of
the half closed rectangles lying in $\{\lambda\in\mathbb{C}:\mid\lambda
\mid>C\}$ and $e(t,\sigma)$\textit{ is the spectral projection defined by
contour integration of the resolvent of }$H_{t}$ over $\sigma.$
\end{definition}

\begin{remark}
Since the large eigenvalues of $H_{0}$ and $H_{\pi}$ are simple, Theorem 1 of
[17] for the operator $H$ can be written as follows: The following statements
are equivalent

$(a)$ The operator $H$ has no spectral singularity at infinity.

$(b)$ $H$ is an asymptotically spectral operator.

$(c)$ There exist constant $c_{1}$ and $N$ such that for all $\mid n\mid>N$
and $t\in(-\pi,\pi]$ the eigenvalues $\lambda_{n}(t)$ are simple and
\begin{equation}
\mid d_{n}(t)\mid^{-1}<c_{1}%
\end{equation}
where $d_{n}(t)=(\Psi_{n,t},\Psi_{n,t}^{\ast}),$ $\Psi_{n,t}$ and $\Psi
_{n,t}^{\ast}$ are the normalized eigenfunctions of $H_{t}$ and $H_{t}^{\ast}$
corresponding to the eigenvalues $\lambda_{n}(t)$ and $\overline{\lambda
_{n}(t)}$ respectively.

Note that if $\lambda_{n}(t)$ is a simple eigenvalue then the normalized
eigenfunctions $\Psi_{n,t}$ and $\Psi_{n,t}^{\ast}$ are determined uniquely up
to constant of modulus $1.$ Therefore $\mid d_{n}(t)\mid$ is uniquely defined
and is the norm of the projection defined by integration of the resolvent of
the operator $H_{t}$ over a closed contour containing only the simple
eigenvalue $\lambda_{n}(t)$ . Moreover $\mid d_{n}\mid$ is continuous at $t$
if $\lambda_{n}(t)$ is a simple eigenvalue.
\end{remark}

The following proposition follows from Remark 4, Definition 2 and (101).

\begin{proposition}
The operator $H$ has a spectral singularities at infinity if and only if there
exists a sequence of pairs $\{(n_{k},t_{k})\}$ such that $\lambda_{n_{k}%
}(t_{k})$ is a simple eigenvalue and
\begin{equation}
\lim_{k\rightarrow\infty}d_{n_{k}}(t_{k})=0,
\end{equation}
where $n_{k}\in\mathbb{Z}$ and $t_{k}\in(-\pi,\pi]$.
\end{proposition}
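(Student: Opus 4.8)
The plan is to establish the biconditional in Proposition~1 by unpacking Definition~2 (spectral singularity at infinity) through the formula (88) for the projection norms, reducing everything to the behaviour of the scalar quantities $d_{n}(t)=(\Psi_{n,t},\Psi_{n,t}^{\ast})$. The key structural fact, already supplied by Theorem~4, is that for $|n|>N$ every eigenvalue $\lambda_{n}(t)$ is simple, so the arcs $\Gamma_{n}$ are regular spectral arcs and, by the remarks preceding the proposition, every regular spectral arc lying outside a bounded set is contained in a single $\Gamma_{n}$ with $|n|>N$. Thus the only way to produce regular spectral arcs tending to infinity is to take pieces of the $\Gamma_{n}$ with $|n|\to\infty$, and on each such piece (88) gives $\parallel P(\gamma)\parallel=\sup_{t\in\delta}|d_{n}(t)|^{-1}$.

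First I would prove the forward direction. Suppose $H$ has a spectral singularity at infinity. By Definition~2 there is a sequence $\{\gamma_{m}\}$ of regular spectral arcs with $d(0,\gamma_{m})\to\infty$ and $\parallel P(\gamma_{m})\parallel\to\infty$. Since $d(0,\gamma_{m})\to\infty$, for all large $m$ the arc $\gamma_{m}$ lies outside the compact set $\cup_{|n|\leq N}\Gamma_{n}$, so $\gamma_{m}\subset\Gamma_{n_{m}}$ for some index $n_{m}$ with $|n_{m}|>N$; moreover $|n_{m}|\to\infty$ because $\Gamma_{n}$ stays within a bounded distance of $(2\pi n)^{2}$ and $d(0,\gamma_{m})\to\infty$. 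Applying (88), $\parallel P(\gamma_{m})\parallel=\sup_{t\in\delta_{m}}|d_{n_{m}}(t)|^{-1}\to\infty$ forces the existence of $t_{m}\in\delta_{m}$ with $|d_{n_{m}}(t_{m})|\to 0$; setting $k\mapsto(n_{m},t_{m})$ produces the required sequence $\{(n_{k},t_{k})\}$ with $d_{n_{k}}(t_{k})\to0$.

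For the converse, suppose there is a sequence $\{(n_{k},t_{k})\}$ with $d_{n_{k}}(t_{k})\to0$. Passing to a subsequence, I would first argue that $|n_{k}|\to\infty$: indeed, for each fixed $n$ the function $d_{n}(t)$ is continuous and nonvanishing on the compact parameter set $t\in[0,\pi]$ (equivalently $(-\pi,\pi]$), hence bounded below in modulus, so a sequence driving $d$ to zero cannot have a bounded index set, and we may discard the finitely many indices with $|n_{k}|\le N$. Then for each $k$ I would build a regular spectral arc $\gamma_{k}$ by taking a small closed subinterval $[\alpha_{k},\beta_{k}]\subset[0,\pi]$ containing $t_{k}$ and setting $\gamma_{k}=\{\lambda_{n_{k}}(t):t\in[\alpha_{k},\beta_{k}]\}\subset\Gamma_{n_{k}}$; by the remark following Theorem~5 this is a genuine regular spectral arc. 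Because $\lambda_{n_{k}}(t)=(2\pi n_{k})^{2}+O(n_{k}^{-2})$ by (58) and $|n_{k}|\to\infty$, we have $d(0,\gamma_{k})\to\infty$. Finally, (88) gives $\parallel P(\gamma_{k})\parallel=\sup_{t\in\delta_{k}}|d_{n_{k}}(t)|^{-1}\geq|d_{n_{k}}(t_{k})|^{-1}\to\infty$, which verifies (6) of Definition~2 and shows $H$ has a spectral singularity at infinity.

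The step I expect to require the most care is the converse, specifically the claim $|n_{k}|\to\infty$: it hinges on $d_{n}$ being continuous and strictly nonzero on a compact $t$-set for each fixed $n>N$, which the text justifies via simplicity of $\lambda_{n}(t)$ together with completeness of the root system of $H_{t}$. One should also confirm that the arcs $\gamma_{k}$ constructed are nondegenerate (so that $\lambda^{\prime}(t)\neq0$ on the open interval, part of the regular-arc condition (5)); this follows from Theorem~4 since $\lambda_{n_{k}}(t)$ is a simple analytic eigenvalue and the arc $\Gamma_{n_{k}}$ is a simple analytic arc by Theorem~5$(a)$, so $\lambda^{\prime}(t)\neq 0$ except possibly at isolated points, and one simply chooses $[\alpha_{k},\beta_{k}]$ avoiding them. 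Everything else is a direct translation through the projection formula (88).
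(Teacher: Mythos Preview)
Your proposal is correct and follows exactly the route the paper indicates: the paper itself does not give a detailed argument, stating only that the proposition ``follows from Theorem~4, Definition~2, and (88),'' and your write-up is a careful unpacking of precisely these ingredients (simplicity of $\lambda_{n}(t)$ for $|n|>N$, the definition of spectral singularity at infinity, and the projection-norm formula $\parallel P(\gamma)\parallel=\sup_{t\in\delta}|d_{n}(t)|^{-1}$). The additional care you take in the converse direction---showing $|n_{k}|\to\infty$ via the continuity and nonvanishing of $d_{n}$ on a compact $t$-set, and choosing the subarcs so that $\lambda'(t)\neq0$---fills in details the paper leaves implicit.
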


As it was noted in [3] (see page 539 of [3]) if $\mid a\mid\neq\mid b\mid,$
then it follows from [3] and [7] that $H$ is not a spectral operator. This
fact and the following more general fact easily follows from the formulas of
Section 4.

\begin{proposition}
If $\mid a\mid\neq\mid b\mid,$ then the operator $H$ has the spectral
singularity at infinity and hence is not an asymptotically spectral operator.
\end{proposition}

\begin{proof}
Suppose, without loss of generality, that $\mid a\mid<\mid b\mid.$ As was
noted in Remark 4 the large periodic eigenvalues $\lambda_{n}(0)$ are simple.
Due to (44) the formulas (11), (15) and (25) for $t=0$ have the forms
\begin{align*}
(\lambda_{n}(0)-(2\pi n)^{2}-A(\lambda_{n}(0),0))u_{n}  &  =B(\lambda
_{n}(0),0))v_{n},\\
(\lambda_{n}(0)-(2\pi n)^{2}-A^{\prime}(\lambda_{n}(0),0))u_{n}  &
=B^{\prime}(\lambda_{n}(0),0))v_{n},\text{ }\left\vert u_{n}\right\vert
^{2}+\left\vert v_{n}\right\vert ^{2}=1+O(n^{-2})
\end{align*}
where $u_{n}=(\Psi_{n,0},e^{i2\pi nx}),$ $v_{n}=(\Psi_{n,0},e^{-i2\pi nx}).$
Moreover, by (61), $B(\lambda_{n}(0),0))$ and $B^{\prime}(\lambda_{n}(0),0))$
are nonzero numbers and
\[
\frac{B(\lambda_{n}(0),0))}{B^{\prime}(\lambda_{n}(0),0))}=O(\left(  \mid
a\mid/\mid b\mid\right)  ^{n})=O(n^{-2}).
\]
Using these equalities and arguing as in the proof of (33) we obtain
\[
u_{n}=v_{n}O(n^{-1}),\text{ }\Psi_{n,0}(x)=ce^{-i2\pi nx}+O(n^{-1}),
\]
where $\mid c\mid=1$ and $\Psi_{n,0}(x)$ is the normalized eigenfunction
corresponding to $\lambda_{n}(0).$ Replacing $a$ and $b$ by $\overline{b}$ and
$\overline{a}$ respectively, in the same way we obtain
\[
\Psi_{n,0}^{\ast}(x)=\overline{c}e^{i2\pi nx}+O(n^{-1}).
\]
Thus $(\Psi_{n,0},\Psi_{n,0}^{\ast}(x))\rightarrow0$ as $n\rightarrow\infty$
and hence the proof follows from Proposition 2.
\end{proof}

Thus if $\mid a\mid\neq\mid b\mid,$ then $H$ is not a spectral operator. The
inverse statement is not true. If $\mid a\mid=\mid b\mid$ then in the Theorem
6 we will find the necessary and sufficient conditions on $ab$ for the $H$ to
be the asymptotically spectral operator. To prove this main result of this
paper we first prove some preliminary propositions.

\begin{theorem}
Suppose$\mid a\mid=\mid b\mid$ and
\begin{equation}
\text{ }\inf_{q,p\in\mathbb{N}}\{\mid q\alpha-(2p-1)\mid\}\neq0,
\end{equation}
where $\alpha=\pi^{-1}\arg(ab)$. Then there exists $N$ such that for
$\left\vert n\right\vert >N$ the component $\Gamma_{n}$ of the spectrum $S(H)$
of the operator $H$ is a separated simple analytic arc with the end points
$\lambda_{n}(0)$ and $\lambda_{n}(\pi).$ These components do not contain
spectral singularities. In other words, the number of the spectral
singularities of $H$ is finite.
\end{theorem}

\begin{proof}
Corollary 2, Theorem 2 of [13]\textit{,} and Remark 3 immediately imply that
the eigenvalues $\lambda_{n}(t)$ for $\left\vert n\right\vert >N$ and
$t\in(-\pi,\pi]$ are simple. Therefore for $\left\vert n\right\vert >N$ the
component $\Gamma_{n}$ of the spectrum of the operator $H$ is a separated
simple analytic arc with the end points $\lambda_{n}(0)$ and $\lambda_{n}%
(\pi)$. It is well-known that the spectral singularities of $H$ are contained
in the set of multiple eigenvalues of $H_{t}$ (see [7,12]). Hence, $\Gamma
_{n}$ for $\left\vert n\right\vert >N$ does not contain the spectral
singularities. On the other hand, the multiple eigenvalues are the zeros of
the entire function $\frac{dF(\lambda)}{d\lambda},$ where $F(\lambda)$ is
defined in (3). Since the entire function has a finite number of roots on the
bounded sets the number of the spectral singularities of $H$ is finite
\end{proof}

Now using Theorems 4, 5, Corollary 2, Proposition 2 and the following
equalities we prove the main results of the paper. By Theorem 4 $\lambda
_{n,j}$ satisfies (43) for $j=1,2$ \ and $t\in\lbrack t_{n},\rho].$ Using it
in (45) and (46) we obtain
\begin{equation}
(-C(\lambda_{n,1}(t),t)-4\pi nt-\sqrt{D(\lambda_{n,1}(t),t)})u_{n,1}%
(t)=B(\lambda_{n,1}(t),t)v_{n,1}(t),
\end{equation}%
\begin{equation}
(-C(\lambda_{n,2}(t),t)-4\pi nt+\sqrt{D(\lambda_{n,2}(t),t)})u_{n,2}%
(t)=B(\lambda_{n,2}(t),t)v_{n,2}(t),
\end{equation}%
\begin{equation}
(C(\lambda_{n,1}(t),t)+4\pi nt-\sqrt{D(\lambda_{n,1}(t),t)})v_{n,1}%
(t)=B^{\prime}(\lambda_{n,1}(t),t)u_{n,1}(t),
\end{equation}%
\begin{equation}
(C(\lambda_{n,2}(t),t)+4\pi nt+\sqrt{D(\lambda_{n,2}(t),t)})v_{n,2}%
(t)=B^{\prime}(\lambda_{n,2}(t),t)u_{n,2}(t)
\end{equation}
for $t\in\lbrack t_{n},\rho].$

Since the boundary condition (2) is self-adjoint we have $(H_{t}(q))^{\ast}=$
$H_{t}(\overline{q}).$ Therefore, all formulas and theorems obtained for
$H_{t}$ are true for $H_{t}^{\ast}$ if we replace $a$ and $b$ by $\overline
{b}$ and $\overline{a}$ respectively. For instance, (24) and (25) hold for the
operator $H_{t}^{\ast}$ and hence we have
\begin{equation}
\Psi_{n,j,t}^{\ast}(x)=u_{n,j}^{\ast}(t)e^{i(2\pi n+t)x}+v_{n,j}^{\ast
}(t)e^{i(-2\pi n+t)x}+h_{n,j,t}^{\ast}(x),
\end{equation}%
\begin{equation}
(h_{n,j,t}^{\ast},e^{i(\pm2\pi n+t)x})=0,\text{ }\left\Vert h_{n,j,t}^{\ast
}\right\Vert =O(\frac{1}{n}),\text{ }\left\vert u_{n,j}^{\ast}(t)\right\vert
^{2}+\left\vert v_{n,j}^{\ast}(t)\right\vert ^{2}=1+O(\frac{1}{n^{2}}).
\end{equation}
Note that if (105) holds then $t_{n}=0,$ that is, (106)-(109) are satisfied
for $t\in\lbrack0,\rho].$ For $\mid n\mid>N$ it follows from (24), (25), (110)
and (111) that%
\begin{equation}
(\Psi_{n,j,t},\Psi_{n,j,t}^{\ast})=u_{n,j}(t)\overline{u_{n,j}^{\ast}%
(t)}+v_{n,j}(t)\overline{v_{n,j}^{\ast}(t)}+O(n^{-1}).
\end{equation}
Moreover, by Theorem 4, $\lambda_{n,1}(t)\neq\lambda_{n,2}(t)$ for
$t\in\lbrack t_{n},\rho]$ which imply that%
\begin{equation}
(\Psi_{n,2,t},\Psi_{n,1,t}^{\ast})=u_{n,2}(t)\overline{u_{n,1}^{\ast}%
(t)}+v_{n,2}(t)\overline{v_{n,1}^{\ast}(t)}+O(n^{-1})=0.
\end{equation}

\begin{theorem}
(Main Result) \ If $\mid a\mid=\mid b\mid$ and $\alpha=\pi^{-1}\arg(ab),$ then:

$(a)$ The operator $H$ has no the spectral singularity at infinity and is an
asymptotically spectral operator if and only if (105) holds.

$(b)$ Let $\alpha$ be a rational number, that is, $\alpha=\frac{m}{q}$ where
$m$ and $q$ are irreducible integers. The operator $H$ has no the spectral
singularity at infinity and is an asymptotically spectral operator if and only
if $m$ is an even integer.

Let $\alpha$ be an irrational number. Then $H$ has the spectral singularity at
infinity and is not an asymptotically spectral operator if and only if there
exists a sequence of pairs $\{(q_{k},p_{k})\}\subset\mathbb{N}^{2}$ such
that$\ $%
\begin{equation}
\mid\alpha-\frac{2p_{k}-1}{q_{k}}\mid=o(\frac{1}{q_{k}}),
\end{equation}
where $2p_{k}-1$ and $q_{k}$ are irreducible integers.
\end{theorem}

It is clear that $(b)$ follows from $(a).$ The sufficiency of Theorem 6$(a)$
follows from Remark 4 and the following.

\begin{lemma}
If $\mid a\mid=\mid b\mid$ and (105) holds, then (103) is satisfied.
\end{lemma}

\begin{proof}
If $t\in\lbrack n^{-3},\rho],$ then by (79), (48) and (59) the coefficient of
$u_{n,1}(t)$ in (106) is essentially greater than the coefficient of
$v_{n,1}(t)$. Therefore from (24) and (25) we get
\[
\Psi_{n,1,t}(x)=e^{-i(2\pi n+t)x}+O(n^{-1}).
\]
In the same way we obtain that $\Psi_{n,1,t}^{\ast}$ satisfies the same
formula and hence the equality
\begin{equation}
(\Psi_{n,j,t},\Psi_{n,j,t}^{\ast})=1+O(n^{-1})
\end{equation}
holds uniformly for $t\in\lbrack n^{-3},\rho]$ and $j=1.$

Now suppose that $t\in\lbrack0,n^{-3}].$ First consider the case $nt\geq
\mid\alpha_{n}\mid,$ where $\alpha_{n}$ is defined in Lemma 1. Then using
(79), (85), (87) and taking into account that $\mid\alpha_{n}\mid=\mid
\beta_{n}\mid$ when $\mid a\mid=\mid b\mid$ from (106) we get $\mid
u_{n,1}(t)\mid<\frac{1}{6}\mid v_{n,1}(t)\mid.$ This with (25) gives $\mid
u_{n,1}(t)\mid<\frac{1}{5},$ $\mid v_{n,1}(t)\mid>\frac{4}{5}.$ Similarly,
$\mid u_{n,1}^{\ast}(t)\mid<\frac{1}{5}$ and $\mid v_{n,1}^{\ast}(t)\mid
>\frac{4}{5}.$ Therefore, by (112) we have
\begin{equation}
\mid(\Psi_{n,j,t},\Psi_{n,j,t}^{\ast})\mid>\frac{1}{2}%
\end{equation}
for $j=1.$ In the same way we prove (115) and (116) for $j=2$.

Now, consider the case \ $nt<\mid\alpha_{n}\mid.$ Using (79) and (87) one can
easily see that if $nt=o(\mid\alpha_{n}\mid)$ then both of the number
\begin{equation}
-C(\lambda_{n,1},t)-4\pi nt-\sqrt{D(\lambda_{n,1},t)},\text{ }C(\lambda
_{n,1},t)+4\pi nt-\sqrt{D(\lambda_{n,1},t)},
\end{equation}
if $nt\sim\beta_{n}$ \ then at least one of the numbers in (117) are of order
$\alpha_{n}.$ Hence (106) and (108) imply that $u_{n,1}(t)\sim v_{n,1}(t).$ In
the same way we obtain $u_{n,2}(t)\sim v_{n,2}(t).$ Thus, by (25)%
\begin{equation}
u_{n,1}(t)\sim v_{n,1}(t)\text{ }\sim u_{n,2}(t)\sim v_{n,2}(t)\sim1.
\end{equation}
Similarly
\begin{equation}
u_{n,1}^{\ast}(t)\sim v_{n,1}^{\ast}(t)\text{ }\sim u_{n,2}^{\ast}(t)\sim
v_{n,2}^{\ast}(t)\sim1.
\end{equation}

Using (118), (79) and (87) from (108) and (109) we obtain that
\begin{equation}
\frac{u_{n,1}}{v_{n,1}}=\frac{C(\lambda_{n,1},t)+4\pi nt-\sqrt{D(\lambda
_{n,1},t)}}{B^{\prime}(\lambda_{n,1}(t),t)}=\frac{4\pi nt[1]-\sqrt{4\pi
nt[1]+\alpha_{n}\beta_{n}[1]}}{\alpha_{n}}[1],
\end{equation}%
\begin{equation}
\frac{u_{n,2}}{v_{n,2}}=\frac{C(\lambda_{n,2},t)+4\pi nt+\sqrt{D(\lambda
_{n,2},t)}}{B^{\prime}(\lambda_{n,2}(t),t)}=\frac{4\pi nt[1]+\sqrt{4\pi
nt[1]+\alpha_{n}\beta_{n}[1]}}{\alpha_{n}}[1],
\end{equation}
where, for brevity $1+O(n^{-2})$ is denoted by $[1].$ On the other hand using
(112) and (113) and taking into account (118) and (119) we get
\begin{equation}
(\Psi_{n,1,t},\Psi_{n,1,t}^{\ast})=v_{n,1}(t)\overline{v_{n,1}^{\ast}%
(t)}(1+\frac{u_{n,1}(t)\overline{u_{n,1}^{\ast}(t)}}{v_{n,1}(t)\overline
{v_{n,1}^{\ast}(t)}})+O(n^{-1})=
\end{equation}%
\[
v_{n,1}(t)\overline{v_{n,1}^{\ast}(t)}(1-\frac{u_{n,1}(t)v_{n,2}(t)}%
{v_{n,1}(t)u_{n,2}(t)})+O(n^{-1})).
\]
This with (120) and (121) implies that
\begin{equation}
(\Psi_{n,1,t},\Psi_{n,1,t}^{\ast})=v_{n,1}(t)\overline{v_{n,1}^{\ast}%
(t)}\left(  1-\frac{4\pi nt[1]-\sqrt{(4\pi nt)^{2}[1]+\alpha_{n}\beta_{n}[1]}%
}{4\pi nt[1]+\sqrt{(4\pi nt)^{2}[1]+\alpha_{n}\beta_{n}[1]}}[1]\right)  =
\end{equation}%
\[
v_{n,1}(t)\overline{v_{n,1}^{\ast}(t)}\left(  \frac{2\sqrt{(4\pi
nt)^{2}[1]+\alpha_{n}\beta_{n}[1]}}{4\pi nt[1]+\sqrt{(4\pi nt)^{2}%
[1]+\alpha_{n}\beta_{n}[1]}}[1]\right)
\]
If $(4\pi nt)^{2}=o(\alpha_{n}\beta_{n})$ then the last fraction is $2+o(1)$
and hence (103) holds.

It remains to consider the case $(4\pi nt)^{2}\sim(\alpha_{n}\beta_{n}).$ If
(105) holds then we have inequality (96). Therefore we have $(4\pi
nt)^{2}[1]+\alpha_{n}\beta_{n}[1]\sim\alpha_{n}\beta_{n}.$ Moreover, one can
easily verify that
\[
4\pi nt[1]+\sqrt{(4\pi nt)^{2}[1]+\alpha_{n}\beta_{n}[1]}=4\pi nt(1+\sqrt
{1+(4\pi nt)^{-2}\alpha_{n}\beta_{n}}+o(1))\sim(\alpha_{n}\beta_{n}).
\]
Using these relations in (123) we get (103) in this case. Thus (103) for
$t\in\lbrack0,\rho]$ is proved. In the same way, by using Remark 3, we prove
(103) for $t\in\lbrack\pi-\rho,\pi]$ and it follows from (5) for $t\in
\lbrack\rho,\pi-\rho]$
\end{proof}

The proof of the necessity of Theorem 6$(a)$ follows from Proposition 2 and
the following

\begin{lemma}
If $\mid a\mid=\mid b\mid$ and
\begin{equation}
\text{ }\inf_{q,p\in\mathbb{N}}\{\mid q\alpha-(2p-1)\mid\}=0,
\end{equation}
where $\alpha=\pi^{-1}\arg(ab),$ then there exists a sequence of pairs
$\{(n_{k},t_{k})\}$ satisfying (104), where $n_{k}\in\mathbb{Z},$ $t_{k}%
\in\lbrack0,\pi]$ and $\lambda_{n_{k}}(t_{k})$ is a simple eigenvalue
\end{lemma}

\begin{proof}
If (124) holds then there exists a sequence of pairs $\{(q_{k},p_{k})\}$ such
that $q_{k}\alpha-(2p_{k}-1)\rightarrow0.$ First suppose that the sequence
$\{q_{k}\}$ contains infinite number of even number. Then one can easily
verify that there exists a sequence $\{n_{k}\}$ satisfying
\begin{equation}
\operatorname{Im}((ab)^{2n_{k}})=o((ab)^{2n_{k}})
\end{equation}
and%
\begin{equation}
\lim_{k\rightarrow\infty}sgn(\operatorname{Re}((ab)^{2n_{k}}))=-1.
\end{equation}
By Theorem 4, for the sequence $\{t_{n_{k}}\}$ defined by (91) and now, for
simplicity, redenoted by $\{t_{k}\}$ the eigenvalues $\lambda_{n_{k},j}%
(t_{k})$ are simple and the following relations hold%
\[
(4\pi n_{k}t_{k})^{2}=-\operatorname{Re}(\beta_{n_{k}}\alpha_{n_{k}%
})(1+o(1))=-(\beta_{n_{k}}\alpha_{n_{k}})(1+o(1)),\text{ }%
\]%
\begin{equation}
(4\pi n_{k}t_{k})^{2}+(\beta_{n_{k}}\alpha_{n_{k}})=o(\beta_{n_{k}}%
^{2}),\text{ }4\pi n_{k}t_{k}\sim\beta_{n_{k}}\sim\alpha_{n_{k}}.
\end{equation}
This with (87) implies that $\sqrt{D(\lambda_{n_{k},j},t_{k})}=o(\beta_{n_{k}%
})$ for $j=1,2.$ Then by (79) we have
\begin{equation}
C(\lambda_{n_{k},j},t_{k})+4\pi n_{k}t_{k}\pm\sqrt{D(\lambda_{n_{k}j},t_{k}%
)}=4\pi n_{k}t_{k}(1+o(1)).
\end{equation}
Using (61), (127) and (128) in (106) and (107) and taking into account (25) we
obtain
\[
u_{n_{k},1}(t)\sim v_{n_{k},1}(t)\sim u_{n_{k},2}(t)\sim v_{n_{k},2}(t)\sim1,
\]%
\[
\lim_{k\rightarrow\infty}\frac{v_{n_{k},1}(t_{k})}{u_{n_{k},1}(t_{k})}%
=\lim_{k\rightarrow\infty}\frac{v_{n_{k},2}(t_{k})}{u_{n_{k},2}(t_{k})}.
\]
This with (112) and (113) implies that (104) holds. In the same way we prove
(104) when $\{q_{k}\}$ contains infinite number of odd number.
\end{proof}

\begin{remark}
The main result of this paper shows that the asymptotic spectrality of $H$
depends on $\arg(ab),$ while we have proved in [15] that its spectrum depends
on $ab.$
\end{remark}

\end{document}